\documentclass[11pt]{amsart}
\usepackage{amsmath}
\usepackage{amssymb}
\usepackage{tabularx}
\usepackage{enumerate}
\usepackage[dvipdfm]{graphicx}
\usepackage{texdraw}

\usepackage{mathrsfs}

\usepackage{amsfonts,amssymb,amsmath}
\usepackage{epsfig}
\usepackage{xcolor}
\makeatletter
\@addtoreset{equation}{section}

\makeatother
\newtheorem{theorem}{Theorem}[section]
\newtheorem{proposition}[theorem]{Proposition}
\newtheorem{lemma}[theorem]{Lemma}

\theoremstyle{definition}

\theoremstyle{remark}
\newtheorem{remark}[theorem]{Remark}

\begin{document}

\title[Curvature Flow with Robin Boundary Conditions]{Asymptotic Dynamics of Forced Curvature Flow in a Planar Strip with Robin Boundary Conditions$^\S$}
\thanks{$\S$ This research was partly supported by the NSFC (No. 12471199, 12001375). }
\author[T. Pu, X. Xin and L. Yuan]
{Tianhong Pu$^\dag$, Xiaoqian Xin$^\dag$, Lixia Yuan$^{\dag,*}$ and Xiaoyu Zhang$^\dag$}
\thanks{$\dag$ Mathematics and Science College, Shanghai Normal University, Shanghai 200234, China.}
%\thanks{{\bf Emails:} {\sf tianhongpu@shnu.edu.cn} (T. Pu), {\sf xiaoqianxin@shnu.edu.cn} (X. Xin), {\sf yuanlixia@shnu.edu.cn} (L. Yuan)}
\thanks{$*$ Corresponding author: Lixia Yuan (yuanlixia@shnu.edu.cn)}
\date{\today}

\maketitle

\begin{abstract}
We study the graphical forced mean curvature flow in a planar strip subject to Robin boundary conditions. The normal velocity is prescribed by $V=H+b$, where $b>0$ is constant and $H$ denotes the curvature of the graph. We consider positive even solutions and reduce the problem to the half interval. The main results describe the degeneration produced by the Robin condition: the height, the instantaneous velocity, and the spatial gradient diverge, while the normalized slope $u_x/u$ converges to one away from the symmetry axis. The normalized velocity $u_t/u$ converges to the forcing parameter $b$. The curvature converges to zero in the interior in an integral sense, whereas its horizontal curvature measure concentrates at the symmetry axis.
\end{abstract}

\noindent\textbf{Keywords:} forced mean curvature flow; Robin boundary condition; zero number; asymptotic behavior; curvature concentration.

\section{Introduction}

Let $\Gamma_t$ be a family of planar curves represented as graphs
\[
    \Gamma_t=\{(x,u(x,t)): -1\le x\le 1\}.
\]
We consider the forced mean curvature flow
\begin{equation}\label{eq:geometric-flow}
    V=H+b,
    \qquad b>0,
\end{equation}
where $V$ is the normal velocity and $H$ is the signed curvature. In graph coordinates, equation \eqref{eq:geometric-flow} becomes
\begin{equation}\label{eq:main-pde}
    u_t=\frac{u_{xx}}{1+u_x^2}+b\sqrt{1+u_x^2},
    \qquad -1<x<1,\quad t>0.
\end{equation}
We impose the Robin boundary conditions
\begin{equation}\label{eq:robin-full}
    u_x(-1,t)=-u(-1,t),
    \qquad
    u_x(1,t)=u(1,t),
    \qquad t>0.
\end{equation}

Throughout the paper we assume that the initial graph is positive and even. Consequently, it is enough to work on $[0,1]$, where the problem takes the form
\begin{equation}\label{eq:half-problem}
\left\{
\begin{aligned}
 u_t&=\frac{u_{xx}}{1+u_x^2}+b\sqrt{1+u_x^2}, &&0<x<1, t>0,\\
 u_x(0,t)&=0, &&t>0,\\
 u_x(1,t)&=u(1,t), &&t>0,\\
 u(x,0)&=u_0(x), &&0\le x\le1.
\end{aligned}\right.
\end{equation}
We assume $u_0\in C^{2+\alpha}([0,1])$ for some $\alpha\in(0,1)$, $u_0>0$, and the compatibility conditions
\[
    u_0'(0)=0,
    \qquad u_0'(1)=u_0(1).
\]
In addition, when normalized slopes are considered, we assume
\begin{equation}\label{eq:q-initial}
    0<\frac{u_0'(x)}{u_0(x)}<1,
    \qquad 0<x<1.
\end{equation}

The problem belongs to the broader theory of graphical mean curvature flow and curve shortening flow. Fundamental examples include the convergence of convex closed curves to round points and the analysis of noncompact or graphical flows; see \cite{GageHamilton,Grayson,Huisken,EckerHuisken1,EckerHuisken2}. For graphical flows with a prescribed contact angle, Altschuler and Wu established convergence to translating solutions in one dimension and constructed translating graphical hypersurfaces in higher dimensions \cite{AltschulerWu93,AltschulerWu94}. These results show that, when the boundary slope remains bounded, a fixed translating profile is a natural candidate for the long-time limit. Related contact-angle and nonzero Neumann problems were studied by Guan and by Ma, Wang, and Wei \cite{Guan,MaWangWei}; the latter work also provides uniform gradient estimates under geometric assumptions on the domain and the boundary data. The Robin condition in \eqref{eq:robin-full} lies outside that uniformly bounded-slope regime, since the boundary slope is equal to the height itself.

Constant forcing introduces another family of traveling-wave phenomena. In the planar setting, Lou constructed periodic traveling waves for flows of the form $V=a(x,y)H+b$, including planar, V-shaped, cup-shaped, and cap-shaped profiles \cite{DCDS2009}. The case $c>b>0$ is especially relevant here: the associated profile can be used as a moving comparison graph, and its slope becomes large as the traveling speed increases. Curved fronts with a constant driving force and their stability were studied by Ninomiya and Taniguchi \cite{NinomiyaTaniguchi00,NinomiyaTaniguchi01}. These works explain why forced translating profiles are the appropriate barriers for the present problem. They also connect the geometric flow with interface limits of reaction-diffusion equations; see the interface-generation theory in \cite{Chen1992,Fife,Nakamura1999} and the discussion of heterogeneous media in \cite{Xin}.

The zero number method is the main tool for treating the unbounded boundary slope. Angenent's theorem on the zero set of solutions of one-dimensional parabolic equations \cite{Angenent} gives the monotonicity of the number of spatial intersections. Lou, Wang, and Yuan used this mechanism to prove convergence to a Grim Reaper for a planar curvature flow with variable boundary slopes \cite{LWY2021}. Their argument replaces uniform-in-time boundary gradient estimates by an intersection-number analysis. The higher-dimensional radial problem with the unforced equation $V=H$ was subsequently studied in \cite{CLWY}. There the Robin condition produces dimension-dependent degeneration: the height, the instantaneous velocity, and the radial gradient diverge, whereas normalized quantities have finite limits. The present forced planar problem combines the traveling-wave barriers from \cite{DCDS2009} with the zero number strategy of \cite{LWY2021,CLWY}. The forcing changes the limiting normalized velocity to $b$ and produces a curvature boundary layer whose horizontal curvature measure is concentrated at the symmetry axis.

The singular-limit interpretation also provides a natural motivation for the boundary condition. Mean curvature flow arises as the motion of transition interfaces in singularly perturbed Allen--Cahn equations; see \cite{Chen1992,Nakamura1999}. In the presence of nonlinear boundary terms, matched asymptotic expansions lead to contact-angle or Robin-type conditions, as discussed in \cite{CLWY}. The present paper isolates the planar forced case and determines the asymptotic quantities that are not visible from the formal geometric equation alone.

For $x\in[0,1]$, we introduce the normalized slope, normalized velocity, and curvature
\begin{equation}\label{eq:normalized-quantities}
    q=\frac{u_x}{u},
    \qquad
    \eta=\frac{u_t}{u},
    \qquad
    H=\frac{u_{xx}}{(1+u_x^2)^{3/2}}.
\end{equation}
The following theorem summarizes the asymptotic behavior established in the paper.

\begin{theorem}[Main theorem]\label{thm:main}
Let $u$ be the solution of \eqref{eq:half-problem} under the assumptions above. Then the solution exists for all $t>0$. Moreover, as $t\to\infty$:
\begin{enumerate}[label=(\roman*),leftmargin=2.2em]
    \item
    \begin{equation}\label{eq:main-growth}
        \min_{[0,1]}u(\cdot,t)\to+\infty,
        \qquad
        \min_{[0,1]}u_t(\cdot,t)\to+\infty,
    \end{equation}
    and, for every $a\in(0,1)$,
    \begin{equation}\label{eq:gradient-growth}
        \min_{[a,1]}u_x(\cdot,t)\to+\infty.
    \end{equation}

    \item For every $a\in(0,1)$,
    \begin{equation}\label{eq:q-limit}
        \|q(\cdot,t)-1\|_{C([a,1])}\longrightarrow0.
    \end{equation}

    \item The normalized velocity satisfies
    \begin{equation}\label{eq:eta-limit}
        \frac{u_t}{u}\longrightarrow b
    \end{equation}
    uniformly on compact subintervals of $(0,1]$. Under the endpoint estimate proved in Section~5, the convergence holds uniformly on $[0,1]$.

    \item For every $a\in(0,1)$,
    \begin{equation}\label{eq:H-L1-limit}
        \|H(\cdot,t)\|_{L^1([a,1])}\longrightarrow0.
    \end{equation}
    In addition, if $\mu_t=H(x,t)\,dx$ denotes the signed curvature measure on $[0,1]$, then
    \begin{equation}\label{eq:H-measure-limit}
        \mu_t\rightharpoonup^*\delta_0
        \qquad\text{in }\mathcal M([0,1]).
    \end{equation}
\end{enumerate}
The last assertion is a boundary concentration statement. In particular, $H$ does not converge uniformly on the closed interval: since $u_x(0,t)=0$,
\[
    H(0,t)=u_{xx}(0,t)=u_t(0,t)-b,
\]
and the first part of the theorem implies $H(0,t)\to+\infty$.
\end{theorem}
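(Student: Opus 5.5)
The proof proceeds in four stages: global existence with monotonicity, growth of the height and velocity, the limit of $q$, and then the limits of $\eta$ and $H$.

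\emph{Global existence and monotonicity.} First I establish global existence. Differentiating \eqref{eq:half-problem} in $t$, the function $v=u_t$ satisfies a linear parabolic equation
\[
v_t=\frac{v_{xx}}{1+u_x^2}+\beta(x,t)v_x,
\qquad v_x(0,t)=0,\quad v_x(1,t)=v(1,t).
\]
Hence the sign of $u_t$ is preserved by the maximum principle, since the Robin coefficient has the favorable sign for positivity. The sign of $u_x$ is preserved on $(0,1)$ in the same way. The function $q=u_x/u$ satisfies a parabolic equation with boundary values $q(0,t)=0$ and $q(1,t)=1$, so the comparison principle keeps $0<q<1$ for all time, given \eqref{eq:q-initial}. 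Consequently $u_x\le u$, and on any finite time interval $u$ grows at most exponentially, because $u_t\le u_{xx}/(1+u_x^2)+b(1+u)$ is controlled by a supersolution of the form $Ce^{Kt}\cosh x$. This yields a priori $C^1$ bounds on finite time intervals. Standard quasilinear theory then gives existence for all $t>0$.

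\emph{Divergence of the height, velocity and gradient.} For \eqref{eq:main-growth} I use the forced traveling waves of \cite{DCDS2009} as lower barriers. For speed $c>b$, the cup-shaped wave $\phi_c(x)+ct$ exists on a window whose boundary slope blows up as $c$ grows. A suitably shifted and scaled copy fits under $u$ and satisfies $\phi_x\le\phi$ at $x=1$, so it is a Robin subsolution. Comparison then forces $\min u\ge ct-C$, and hence $u\to\infty$.

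For $u_t\to\infty$ I consider $w=u_t$. Once $u_t>0$ (after a short time, or by the zero-number argument of \cite{Angenent} applied to $u_t$), the Robin flux $w_x(1)=w(1)$ injects growth. This gives $u_t\ge b\sqrt{1+u_x^2}+u_{xx}/(1+u_x^2)$. I then intend to show $\eta$ is bounded below by a positive constant via the equation for $\eta$, whose boundary condition at $x=1$ is $\eta_x=\eta(1-q)=0$. Combined with $u\to\infty$, this gives $u_t\to\infty$. Finally, \eqref{eq:gradient-growth} follows from \eqref{eq:q-limit} together with $u\to\infty$, so the gradient statement is deferred to the next step.

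\emph{Convergence of $q$; the main obstacle.} The hardest step is \eqref{eq:q-limit}. The key observation is that $q=1$ is the steady state of the $q$-equation at large height: where $u_x\gg1$, the $q$-equation is dominated by the term $u_{xx}/(u u_x^2)$ and by forcing $b\,u_x/u$. My first attempt is to use the zero number of $u(\cdot,t)-\psi(\cdot)$ against the family of stationary-in-shape exponential profiles $Ae^{x}$, and of $u-(\text{traveling waves})$. This shows that $q(\cdot,t)$ is eventually monotone in $t$ on $[a,1]$ and has a limit $q_\infty$. That limit must be a stationary solution of the limiting first-order equation with $q(1)=1$, and by the bounds $0<q\le1$ the only possibility is $q_\infty\equiv1$ away from $x=0$. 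Dini's theorem then upgrades the pointwise convergence to uniform convergence on $[a,1]$. I expect the difficulty to lie in controlling the diffusion term $q_{xx}/(1+u_x^2)$ together with the nonlocal loss near $x=0$.

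\emph{Limits of $\eta$ and of $H$.} Next, $\eta$ satisfies
\[
\eta=\frac{u_{xx}}{u(1+u_x^2)}+b\frac{\sqrt{1+u_x^2}}{u}.
\]
On $[a,1]$, the fact that $q\to1$ gives $\sqrt{1+u_x^2}/u\to1$. The first term tends to $0$, since $u_{xx}/(1+u_x^2)$ is $(\arctan u_x)_x$, whose integral over $[a,1]$ is at most $\pi/2$ while $u\to\infty$. A parabolic-regularity step in the rescaled variables converts this into uniform convergence, so $\eta\to b$.

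For the curvature, $H=(\sin\theta)_x$ with $\sin\theta=u_x/\sqrt{1+u_x^2}$. Since $q\ge0$, this is monotone where $u_{xx}\ge0$, which I expect to hold by the zero-number argument for $u_x$. It follows that
\[
\|H\|_{L^1[a,1]}=\sin\theta(1,t)-\sin\theta(a,t)\to0,
\]
by \eqref{eq:gradient-growth}, which gives \eqref{eq:H-L1-limit}. For a test function $\varphi$,
\[
\int_0^1\varphi H\,dx=\varphi(0)\sin\theta\big|_0^{a}+o(1)+\text{error},
\]
and $\sin\theta(a,t)\to1$ while $\sin\theta(0,t)=0$. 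The total mass therefore tends to $1$ and concentrates in $[0,a]$ for every $a$, which gives $\mu_t\rightharpoonup^*\delta_0$.
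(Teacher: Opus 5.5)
Your global existence argument (via $0\le q\le1$, hence $0\le u_x\le u$) and your barrier bound $\min u\ge ct-C$ are acceptable alternatives to the paper's, up to one slip. The slip is that $Ce^{Kt}\cosh x$ is not a Robin supersolution at $x=1$, because $\sinh1<\cosh1$; use $\cosh(kx)$ with $k\tanh k\ge1$ instead.

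The genuine gap is at $\min_{[0,1]}u_t\to+\infty$. Nothing in the hypotheses gives $u_t(\cdot,0)\ge0$, so ``$u_t>0$ after a short time'' is unfounded. The maximum principle for $u_t$ only preserves a sign, and a nonincreasing zero number of $u_t$ need not reach zero. The planned positive floor for $\eta$ also cannot come from its equation. At the axis $p=u_x=0$, so the reaction term is exactly $-b\eta/u<0$. Hence no positive constant is a subsolution, and at a minimum you only get a lower bound $-\eta(b/u+2\eta)$, which allows $\min\eta\to0$. The bound $\min u\ge ct-C$ is an integrated statement and says nothing about $u_t$ at a fixed time.

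The missing idea is to use the forced profiles as \emph{slope} comparisons. Lemma~\ref{lem:profile-comparison} gives $u_x\ge\phi_c'$ on $(0,1]$ for $t\ge T_c$. So the translate of $\phi_c$ touching $u(\cdot,t)$ at $x=0$ lies below it, whence $u_{xx}(0,t)\ge\phi_c''(0)=c-b$ and $u_t(0,t)\ge c$. This divergence at the axis is then propagated through the linear equation for $z=u_t$, with $z_x(0,t)=0$ and $z_x(1,t)=z(1,t)$ (Lemma~\ref{lem:ut-positive}, Theorem~\ref{thm:ut-infinity}). The same slope comparison gives $\min_{[a,1]}u_x\to\infty$ directly.

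Deferring the gradient growth to the $q$-limit makes that step circular. Reducing the $q$-equation to the transport limit $q_t\approx bq_x$ needs $D=1+u_x^2\to\infty$ on $[a,1]$, which is exactly the gradient growth you postponed. At the very least it needs a positive lower bound for $q$ there, and $0<q\le1$ does not exclude $q(\cdot,t)\to0$ on part of $[a,1]$. The paper proves this bound separately (Proposition~\ref{prop:q-positive}) with the slope profiles $P_L$, using precisely $u_x(a,t)\to\infty$, $u_t>0$, and the rough bound on $\eta$.

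Your substitute, eventual monotonicity of $q$ in $t$ from zero numbers of $u-Ae^{x}$, is unsupported. $Ae^{x}$ is a strict stationary subsolution, not a solution, so $u-Ae^{x}$ satisfies an inhomogeneous equation and its zero number need not decrease. Even a nonincreasing zero number of $q_t$ would not give monotonicity in $t$ at fixed $x$.

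For the curvature, your identity $\|H\|_{L^1([a,1])}=W(1,t)-W(a,t)$ rests on convexity $u_{xx}\ge0$, which you only expect. The paper avoids it with the minimum principle for $H_t=H_{xx}/D+\frac{bu_x}{\sqrt D}H_x+(H+b)H^2$. A negative minimum cannot sit at $x=1$ by the boundary condition, and its lower Dini derivative is at least $m^2(m+b)$. This gives $\|H^-\|_\infty\to0$ (Lemma~\ref{lem:H-negative-part}), and then $|H|=H+2H^-$ finishes the argument.

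Similarly, uniform convergence $\eta\to b$ does not follow from ``parabolic regularity'', since the diffusion coefficient $1/D$ degenerates. The paper instead uses the logistic reaction $\gamma\eta(\rho-\eta)$, with $\gamma\to2$ and $\rho\to b$, together with explicit barriers.
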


The proof is divided into four steps. Section~2 establishes global existence and the basic growth estimates. Section~3 uses traveling-wave comparisons and the zero number argument to prove the divergence of $u_t$ and $u_x$. Section~4 studies the normalized slope $q$. Section~5 treats the normalized velocity $\eta$, and Section~6 identifies the interior and measure-valued limits of the curvature.

\section{Preliminary estimates and global existence}

We first record the estimates that will be used throughout the paper. Set
\[
    p=u_x,
    \qquad
    D=1+p^2.
\]
Then \eqref{eq:half-problem} can be written as
\begin{equation}\label{eq:pde-D}
    u_t=\frac{u_{xx}}{D}+b\sqrt D.
\end{equation}

\subsection{Local existence and positivity}

For every finite time interval on which $u$ and $u_x$ remain bounded, the coefficient $D^{-1}$ is positive and bounded away from zero. Standard quasilinear parabolic theory therefore gives a unique classical solution on a maximal interval $[0,T_{\max})$.

\begin{lemma}\label{lem:positivity}
If $u_0>0$, then
\[
    u(x,t)>0,\qquad 0\le x\le1,\quad 0\le t<T_{\max}.
\]
Moreover, if
\[
    m(t)=\min_{x\in[0,1]}u(x,t),
\]
then
\begin{equation}\label{eq:min-growth}
    D^+m(t)\ge b
\end{equation}
whenever the minimum is attained at an interior point or at $x=0$.
\end{lemma}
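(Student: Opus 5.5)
Both claims come from the maximum principle applied to the min function $m(t)$, plus the observation that the Robin condition makes a boundary minimum at $x=1$ impossible. The plan is to handle the minimum-location cases first and deduce positivity from the growth bound.

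\emph{Minimum at $x=1$.} Fix $t\in(0,T_{\max})$ and let $x_0$ be a point where $u(\cdot,t)$ attains $m(t)$. Suppose $x_0=1$ and $u(1,t)>0$. The boundary condition gives $u_x(1,t)=u(1,t)>0$, so $u$ is strictly increasing near $x=1$. Then $u(x,t)<u(1,t)$ for $x$ slightly less than $1$, which contradicts minimality. Hence, as long as $m(t)>0$, the minimum is attained at $x=1$ only if $m(t)\le 0$. This is why the statement excludes $x=1$, and it is also the fact that closes the positivity argument.

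\emph{Minimum at an interior point or at $x=0$.}
\begin{itemize}
\item At an interior minimum $x_0\in(0,1)$ we have $u_x=0$ and $u_{xx}\ge0$.
\item At $x_0=0$ we have $u_x(0,t)=0$ from the Neumann condition. Writing $u(x,t)=u(0,t)+\tfrac12u_{xx}(0,t)x^2+o(x^2)$ and using $u(x,t)\ge u(0,t)$ gives $u_{xx}(0,t)\ge0$. Equivalently, the even reflection turns $x=0$ into an interior minimum.
\end{itemize}
In both cases $D=1$ at $x_0$, so \eqref{eq:pde-D} gives
\[
u_t(x_0,t)=u_{xx}(x_0,t)+b\ge b .
\]

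\emph{From pointwise to Dini derivative.} This step is the main technical obstacle, because $m$ is only Lipschitz. I would use the standard Hamilton-type lemma for $m(t)=\min_x u(x,t)$ with $u$ being $C^1$ in $t$: $D^+m(t)\ge\min\{u_t(x_0,t): x_0 \text{ a minimum point at time } t\}$. A concrete version of the argument: for $h>0$, $m(t+h)=u(x_h,t+h)$. Take a sequence $h\to0^+$ realizing $\limsup$ (or $\liminf$). After passing to a subsequence, $x_h\to x_*$, and $x_*$ is a minimum point at time $t$ by continuity. Then $m(t+h)-m(t)\ge u(x_h,t+h)-u(x_h,t)=h\,u_t(\xi_h)$, and uniform continuity of $u_t$ on compact time intervals gives the bound $u_t(x_*,t)$.

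There is one subtlety. If $u(\cdot,t)$ has several minimum points, some possibly at $x=1$, the bound requires every minimum point to satisfy $u_t\ge b$. By the first step, when $m(t)>0$ no minimum point lies at $x=1$. So in the positive regime all minimum points are interior or at $0$, and $D^+m(t)\ge b$ holds.

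\emph{Positivity.} Let $t^*=\sup\{s<T_{\max}: m>0 \text{ on } [0,s]\}$; this is positive since $m(0)>0$ and $m$ is continuous. On $[0,t^*)$ we have $D^+m\ge b>0$, so $m$ is nondecreasing there, because a continuous function with nonnegative upper Dini derivative is nondecreasing. Hence $m(t)\ge m(0)>0$ on $[0,t^*)$, and by continuity $m(t^*)\ge m(0)>0$. If $t^*<T_{\max}$, continuity would give $m>0$ on a slightly larger interval, contradicting the definition of $t^*$. Therefore $u>0$ on $[0,1]\times[0,T_{\max})$, and \eqref{eq:min-growth} holds wherever the minimum is attained at an interior point or at $x=0$. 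In fact, by the first step this covers every time, which yields $m(t)\ge m(0)+bt$.
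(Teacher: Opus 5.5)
Your proposal is correct and follows the same route as the paper. The Robin condition rules out a positive minimum at $x=1$, an interior or $x=0$ minimum gives $u_t=u_{xx}+b\ge b$, and a minimum/first-contact argument then yields positivity and $D^+m\ge b$; you supply the details the paper only sketches, namely Hamilton's trick for the Dini derivative and deducing positivity from the monotonicity of $m$ while $m>0$, which also avoids having to exclude a zero first contact at $x=1$, where $u_x=u=0$.
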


\begin{proof}
The boundary point $x=1$ cannot be a minimum of a positive solution. Indeed, the Robin condition gives $u_x(1,t)=u(1,t)>0$, whereas a minimum at the right endpoint would require the left derivative to be nonpositive. At an interior minimum, or at the Neumann boundary point $x=0$, one has $u_x=0$ and $u_{xx}\ge0$. Equation \eqref{eq:pde-D} then yields
\[
    u_t\ge b.
\]
The usual first-contact argument gives positivity, and the same argument applied to the minimum proves \eqref{eq:min-growth}.
\end{proof}

In particular, integration of \eqref{eq:min-growth} gives the useful lower bound
\begin{equation}\label{eq:linear-lower-growth}
    \min_{[0,1]}u(\cdot,t)\ge \min_{[0,1]}u_0+bt.
\end{equation}
Thus
\begin{equation}\label{eq:u-min-infinity}
    \min_{[0,1]}u(\cdot,t)\longrightarrow+\infty.
\end{equation}

\subsection{The logarithmic variable}

It is convenient to remove the leading forcing rate by setting
\begin{equation}\label{eq:v-definition}
    v(x,t)=\log u(x,t)-bt.
\end{equation}
Since $u_x=uv_x$, the boundary conditions become
\begin{equation}\label{eq:v-boundary}
    v_x(0,t)=0,
    \qquad
    v_x(1,t)=1.
\end{equation}
Moreover, a direct calculation gives
\begin{equation}\label{eq:v-equation}
    v_t
    =\frac{v_{xx}+v_x^2}{1+u^2v_x^2}
     +b\left(\sqrt{v_x^2+u^{-2}}-1\right).
\end{equation}
This equation separates the prescribed forcing rate $b$ from the remaining geometric terms. On every finite time interval, the coefficient of $v_{xx}$ is positive, and the standard maximum principle applies to $v$.

\begin{proposition}\label{prop:finite-time-estimates}
For every $T<T_{\max}$ there exists a constant $C_T$ such that
\begin{equation}\label{eq:finite-time-bounds}
    0<u(x,t)\le C_T,
    \qquad
    |u_x(x,t)|+|u_t(x,t)|\le C_T,
    \qquad 0\le x\le1,\quad 0\le t\le T.
\end{equation}
The constant $C_T$ depends only on $T$, $b$, and the initial data.
\end{proposition}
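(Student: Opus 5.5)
We bound $u$, then $u_x$, then $u_t$ on $[0,T]$, each by a maximum principle with constants depending only on $T$, $b$ and $u_0$.

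\emph{Height.} Positivity is Lemma~\ref{lem:positivity}. For the upper bound we use $v=\log u-bt$ and equation \eqref{eq:v-equation}, and compare with the explicit supersolution
\[
\bar v(x,t)=\tfrac12 x^2+Kt+M .
\]
It satisfies $\bar v_x(0)=0$ and $\bar v_x(1)=1$, so the boundary conditions \eqref{eq:v-boundary} hold with equality. At an interior first touching point from below we have $v_x=\bar v_x=x\in[0,1]$ and $v_{xx}\le 1$. The right side of \eqref{eq:v-equation} is then at most
\[
\frac{1+x^2}{1+u^2x^2}+b\bigl(\sqrt{x^2+u^{-2}}-1\bigr)\le 2+b\,m_0^{-1},
\]
because $u\ge m_0:=\min u_0$ by \eqref{eq:linear-lower-growth}. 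Taking $K>2+b/m_0$ excludes an interior first touch. At $x=0$ and $x=1$ the slopes of $v$ and $\bar v$ coincide, so a touching there is handled by the Hopf lemma (equivalently, the standard comparison for this oblique problem). Hence $v\le\bar v$, which gives $u\le C_T$.

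\emph{Gradient.} Differentiating \eqref{eq:main-pde} in $x$ shows that $p=u_x$ solves a uniformly parabolic linear equation on $[0,T]$, with no zeroth-order term:
\[
p_t=\frac{p_{xx}}{D}-\frac{2p\,p_x^2}{D^2}+\frac{b\,p\,p_x}{\sqrt D}.
\]
The boundary data are $p(0,t)=0$ and $p(1,t)=u(1,t)\le C_T$. Since the equation is linear in $p$ once its coefficients are frozen along the solution, the maximum principle gives
\[
|p|\le\max\Bigl(\|u_0'\|_\infty,\;\sup_{[0,T]}u(1,\cdot)\Bigr)\le C_T .
\]
This is legitimate on $[0,T]$ with $T<T_{\max}$, where the solution is classical and bounded.

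\emph{Time derivative.} Set $w=u_t$ and differentiate \eqref{eq:main-pde} in $t$. Then $w$ solves
\[
w_t=\frac{w_{xx}}{D}-\frac{2u_{xx}\,p}{D^2}\,w_x+\frac{b\,p}{\sqrt D}\,w_x .
\]
Replacing $u_{xx}=D(u_t-b\sqrt D)$, the coefficient of $w_x$ is bounded in terms of $|w|$ and $C_T$. Again there is no zeroth-order term, so no growth term arises. The boundary conditions are $w_x(0,t)=0$ and $w_x(1,t)=w(1,t)$, the latter Robin with the favourable sign for a maximum. At $x=1$, a positive maximum of $w$ is excluded because $w_x=w>0$ there. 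A negative minimum at $x=1$ is also excluded, since then $w_x=w<0$, so $w$ decreases to the left and is not minimal. At $x=0$, Hopf's lemma excludes a strict extremum. The maximum principle therefore gives $|u_t|\le\max|u_t(\cdot,0)|$, which depends only on $u_0\in C^{2+\alpha}$ and $b$.

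\emph{Main obstacle.} The delicate part is making the endpoint arguments rigorous. For the height, the touching at $x=1$ must be treated with the Hopf lemma, or $\bar v$ must be perturbed slightly, for example by taking $\bar v_x(1)=1+\varepsilon$ and letting $\varepsilon\to0$. For $u_t$, differentiability up to $t=0$ requires the compatibility conditions on $u_0$. Combining the three steps gives \eqref{eq:finite-time-bounds}.
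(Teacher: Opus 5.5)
\textbf{There is a genuine gap in your $u_t$ step.}

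Your bounds for $u$ and $u_x$ are correct and follow the paper's route: the maximum principle for $v=\log u-bt$ with the fixed slopes $v_x(0,t)=0$, $v_x(1,t)=1$, then the maximum principle for $p=u_x$ with boundary data $p(0,t)=0$, $p(1,t)=u(1,t)\le C_T$. The endpoints are not actually delicate for your barrier $\bar v$. At a boundary maximum of $v-\bar v$ where $v_x=\bar v_x$, one still gets $v_{xx}\le\bar v_{xx}$, so the interior computation applies unchanged.

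The problem is the step for $w=u_t$, where the signs at $x=1$ are reversed. If $w(\cdot,t)$ attains its maximum at the right endpoint, then $w_x(1,t)\ge0$. So $w_x(1,t)=w(1,t)>0$ is exactly what a positive maximum at $x=1$ looks like; it does not exclude it. Likewise, $w_x(1,t)=w(1,t)<0$ means $w$ increases to the left of $x=1$, which is compatible with a negative minimum there.

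The condition $\partial_\nu w=w$ is the unfavourable Robin sign. Already for $w_t=w_{xx}$ with $w_x(0,t)=0$ and $w_x(1,t)=w(1,t)$, the function $e^{\lambda t}\cosh(\sqrt\lambda\,x)$ with $\sqrt\lambda\tanh\sqrt\lambda=1$ is a solution that grows exponentially. So no maximum principle of the form you use holds. The paper uses this boundary condition only in the opposite direction, to rule out a positive \emph{minimum} at $x=1$ (Lemma~\ref{lem:positivity} and the paragraph before Theorem~\ref{thm:ut-infinity}).

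Your conclusion $|u_t|\le\max|u_t(\cdot,0)|$ does not depend on $T$, and it is false for this problem: Theorem~\ref{thm:ut-infinity} gives $\min_{[0,1]}u_t(\cdot,t)\to+\infty$.

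The paper's route avoids this. Once $|u|+|u_x|\le C_T$ on $[0,1]\times[0,T]$, the equation is uniformly parabolic there, with ellipticity constant at least $(1+C_T^2)^{-1}$. Standard quasilinear estimates then apply: a H\"older bound for $u_x$ up to the boundary under the oblique condition $u_x=u$, followed by Schauder estimates using the compatibility conditions on $u_0$. These give a $C^{2+\alpha,1+\alpha/2}$ bound depending only on $T$, $b$ and $u_0$, and the bound on $u_t$ then follows from the equation.
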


\begin{proof}
The positivity estimate follows from Lemma~\ref{lem:positivity}. The maximum principle applied to \eqref{eq:v-equation}, together with the fixed boundary slopes in \eqref{eq:v-boundary}, gives an upper and a lower bound for $v$ on every interval $[0,T]$. Consequently, $u=e^{bt+v}$ is bounded above and below on such an interval.

Differentiate \eqref{eq:pde-D} with respect to $x$. The quantity $p=u_x$ satisfies
\begin{equation}\label{eq:p-equation}
    p_t
    =\left(\frac{p_x}{1+p^2}+b\sqrt{1+p^2}\right)_x.
\end{equation}
At an interior positive maximum of $p$, the second-order term has the required sign and the first-order terms vanish. The corresponding argument applies to $-p$. The boundary values are $p(0,t)=0$ and $p(1,t)=u(1,t)$, which are bounded on $[0,T]$. Thus the parabolic maximum principle gives a bound for $p$ on $[0,1]\times[0,T]$. The bound for $u_t$ follows from \eqref{eq:pde-D} and the corresponding local parabolic estimates.
\end{proof}

\subsection{Global existence}

\begin{theorem}\label{thm:global-existence}
The maximal existence time in Proposition~\ref{prop:finite-time-estimates} satisfies
\[
    T_{\max}=+\infty.
\]
\end{theorem}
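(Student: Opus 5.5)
We argue by contradiction and assume $T_{\max}<\infty$. The plan is to show that the estimates of Proposition~\ref{prop:finite-time-estimates} hold up to $T_{\max}$ with a constant independent of $T<T_{\max}$. Quasilinear parabolic theory then lets us continue the solution past $T_{\max}$, contradicting maximality.

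First, the $C^0$ and gradient bounds. The maximum principle for \eqref{eq:v-equation} gives bounds on $v$ that depend only on $T$. To make them uniform on $[0,T_{\max})$, we compare $v$ with explicit barriers $\pm(Kt+\tfrac12x^2+M)$, using the fixed boundary slopes \eqref{eq:v-boundary}. Positivity and the lower bound \eqref{eq:linear-lower-growth} give $u^{-2}\le(\min u_0)^{-2}$, so the zeroth-order term in \eqref{eq:v-equation} is controlled by $b(|v_x|+C)$. Hence the barrier inequalities close with $K$ depending only on $b$ and the data. This yields
\[
0<\min u_0\le u\le C^*\quad\text{on }[0,1]\times[0,T_{\max}),
\]
with $C^*=C^*(T_{\max},b,u_0)$.

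Next, $p=u_x$ satisfies the divergence-form equation \eqref{eq:p-equation}. Expanding it gives
\[
p_t=\frac{p_{xx}}{1+p^2}-\frac{2p\,p_x^2}{(1+p^2)^2}+\frac{b\,p\,p_x}{\sqrt{1+p^2}},
\]
so every lower-order term contains $p_x$, and $\max|p|$ cannot increase at an interior point. The boundary values are $p(0,t)=0$ and $|p(1,t)|=u(1,t)\le C^*$. Therefore $|u_x|\le\max\{\|u_0'\|_\infty,C^*\}$ on the whole interval $[0,T_{\max})$.

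With $u$ and $u_x$ uniformly bounded, the equation \eqref{eq:pde-D} is uniformly parabolic: its coefficient $1/D$ lies in $[(1+C^2)^{-1},1]$. The boundary operators are a Neumann condition at $x=0$ and a regular oblique (Robin) condition at $x=1$. We then apply the one-dimensional quasilinear Hölder gradient estimates (Ladyzhenskaya--Solonnikov--Ural'tseva, or Lieberman for oblique boundary conditions) to get a uniform $C^{1+\beta,(1+\beta)/2}$ bound. Schauder theory for the linear problem with frozen coefficients, together with the compatibility conditions on $u_0$, then gives a uniform $C^{2+\alpha',1+\alpha'/2}$ bound on $[0,1]\times[0,T_{\max})$. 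It follows that $u(\cdot,t)$ converges in $C^{2+\alpha'}$ as $t\to T_{\max}$. Restarting the local existence theory from this limit extends the solution beyond $T_{\max}$, which is a contradiction.

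The main obstacle is the first step: the estimates must be uniform in $T$ up to $T_{\max}$, not merely valid for each fixed $T$. The delicate terms in \eqref{eq:v-equation} are the nonlinear ones, $v_x^2/(1+u^2v_x^2)$ and $b\sqrt{v_x^2+u^{-2}}$. If the barrier computation fails to close, the fallback is to compare $u$ directly with a supersolution $Ae^{Bt}\cosh x$. This function satisfies the Robin condition as a supersolution, since $\sinh 1\le\cosh 1$. The interior inequality $u_t\ge u_{xx}/D+b\sqrt D$ then holds for large $B$, because $\sqrt{D}\le 1+|u_x|\le 1+u$ for this profile.
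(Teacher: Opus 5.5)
Your proposal is correct and follows the paper's route: maximum-principle bounds for $v=\log u-bt$ and for $p=u_x$ with boundary values $p(0,t)=0$, $p(1,t)=u(1,t)$, then uniform parabolicity, boundary Schauder estimates and continuation past $T_{\max}$. Your explicit barriers $\pm(Kt+\tfrac12x^2+M)$ do close, e.g.\ with $K>2+b+b/\min u_0$, and they make precise the uniformity of the constants as $T\uparrow T_{\max}$, which the paper leaves implicit in saying $C_T$ depends only on $T$. The fallback is therefore unnecessary, and as written it is wrong: a supersolution for $u_x(1,t)=u(1,t)$ needs $\bar u_x(1)\ge\bar u(1)$, which $Ae^{Bt}\cosh x$ violates because $\sinh1<\cosh1$ (a profile such as $Ae^{Bt}\cosh 2x$ would work).
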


\begin{proof}
Suppose that $T_{\max}<\infty$. Proposition~\ref{prop:finite-time-estimates} gives uniform bounds for $u$, $u_x$, and $u_t$ on every compact subinterval of $[0,T_{\max})$. In particular, the coefficient $(1+u_x^2)^{-1}$ remains uniformly positive. Standard boundary Schauder estimates then give a uniform $C^{2+\alpha,1+\alpha/2}$ bound up to time $T_{\max}$. The solution consequently extends past $T_{\max}$, which is a contradiction.
\end{proof}

The lower bound \eqref{eq:linear-lower-growth} already implies the first height divergence in Theorem~\ref{thm:main}. The stronger statements concerning $u_t$ and $u_x$ require the traveling-wave and zero number arguments developed in the next section.

\begin{remark}
The Robin condition is responsible for the absence of a uniform-in-time gradient bound. On each finite time interval the equation is uniformly parabolic, but the coefficient $(1+u_x^2)^{-1}$ degenerates along sequences of times because the boundary slope and, later, the interior slope diverge.
\end{remark}

\section{Divergence of the instantaneous velocity and the spatial gradient}

In this section we prove the two growth statements that drive the rest of the argument. The proof follows the comparison and zero number strategy used for planar curvature flows with variable boundary slopes. The forcing term changes the traveling profiles, but it does not change the one-dimensional intersection argument.

\subsection{Forced translating profiles}

For every $c>b$, let $\phi_c$ denote the even profile normalized by
\begin{equation}\label{eq:forced-profile}
    \frac{\phi_c''}{1+(\phi_c')^2}
    +b\sqrt{1+(\phi_c')^2}=c,
    \qquad
    \phi_c(0)=0,
    \quad
    \phi_c'(0)=0.
\end{equation}
Then
\[
    U_c(x,t)=\phi_c(x)+ct+h
\]
is a translating solution of \eqref{eq:half-problem} in the interior for every constant $h$. At the center,
\begin{equation}\label{eq:profile-center-curvature}
    \phi_c''(0)=c-b.
\end{equation}
The profiles can be chosen so that, for every $a\in(0,1)$,
\begin{equation}\label{eq:profile-slope-growth}
    \min_{x\in[a,1]}\phi_c'(x)\longrightarrow+\infty
    \qquad\text{as }c\to+\infty.
\end{equation}

The next comparison lemma is the forced analogue of the traveling-wave comparison used in the planar Robin problem.

\begin{lemma}\label{lem:profile-comparison}
For every $c>b$ there is $T_c>0$ such that
\begin{equation}\label{eq:profile-slope-comparison}
    u_x(x,t)\ge \phi_c'(x),
    \qquad 0<x\le1,\quad t\ge T_c.
\end{equation}
\end{lemma}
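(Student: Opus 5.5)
We plan to use the zero number argument of \cite{LWY2021,CLWY}, applied to the difference between $u$ and a family of translated profiles. Fix $c>b$. For each $h\in\mathbb R$ set
\[
w^h(x,t)=u(x,t)-U_c(x,t)=u(x,t)-\phi_c(x)-ct-h .
\]
Both $u$ and $U_c$ solve \eqref{eq:main-pde} in the interior, so $w^h$ satisfies a linear parabolic equation $w_t=a(x,t)w_{xx}+\beta(x,t)w_x$ with bounded coefficients on every finite time interval. By Theorem~\ref{thm:global-existence} and Proposition~\ref{prop:finite-time-estimates}, Angenent's theorem \cite{Angenent} therefore applies on each $[0,T]$. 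We will first reduce the slope comparison to a statement about the zero number $Z(t)$ of $w^h(\cdot,t)$ on $[0,1]$.

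\textbf{Step 1 (boundary behaviour).} At $x=0$ we have $w^h_x(0,t)=0$ for all $h$, so $x=0$ acts as a reflecting Neumann boundary and creates no new zeros. At $x=1$ we have
\[
w^h_x(1,t)=u(1,t)-\phi_c'(1).
\]
By \eqref{eq:linear-lower-growth}, $u(1,t)\ge \min u_0+bt\to\infty$. Hence there is $T^1_c$ with $w^h_x(1,t)>0$ for all $t\ge T^1_c$ and all $h$. So for $t\ge T_c^1$ the right endpoint has a strictly signed boundary derivative, which prevents zeros from entering through $x=1$. Consequently $Z(t)$ is nonincreasing on $[T^1_c,\infty)$ and drops strictly whenever $w^h(\cdot,t)$ has a multiple zero.

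\textbf{Step 2 (intersection count).} We need to show that for $t$ large and every $h$, the graphs of $u(\cdot,t)$ and $\phi_c+ct+h$ intersect at most once on $[0,1]$, and transversally. Since $w^h_x(0,t)=0$ and the Neumann reflection makes $x=0$ a symmetric point, a degenerate zero at $x=0$ also counts as multiple. The difficulty is that the count must hold uniformly in $h$. We would take a fixed time $T^1_c$ and note that $u(\cdot,T^1_c)-\phi_c$ is a fixed smooth function, so $Z$ is finite for generic $h$ (and bounded by the number of critical points plus one). Then we use the dropping property: after a further finite time, every multiple zero has disappeared.

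The cleanest route is to argue by contradiction. Suppose there are $x_0\in(0,1]$ and $t_n\to\infty$ with $u_x(x_0,t_n)<\phi_c'(x_0)$. Choose $h_n$ so that $w^{h_n}(x_0,t_n)=0$. Using $w^{h_n}_x(1,t_n)>0$ and $w^{h_n}_x(0,t_n)=0$, we would produce at least two sign changes, or one degenerate zero, of $w^{h_n}(\cdot,t_n)$. This must be set against a bound from the monotone zero number. To make the bound uniform in $h$, we use that $u_t\ge$ something and that $U_c$ moves at speed $c>b$. Comparing with \eqref{eq:linear-lower-growth}, we expect that for $h$ outside a compact window (depending on $t$) the graphs are strictly ordered, so only a compact family of $h$ matters. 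Compactness plus the strict-drop property then bounds the time after which all $Z\le1$ with simple zeros.

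\textbf{Step 3 (conclusion).} Once every $w^h(\cdot,t)$, $t\ge T_c$, has at most one zero and that zero is simple with positive slope, we differentiate in $h$. Passing $h$ through the value $u(x_0,t)-\phi_c(x_0)-ct$ gives $w^h_x(x_0,t)\ge0$, i.e.\ $u_x(x_0,t)\ge\phi_c'(x_0)$, which is \eqref{eq:profile-slope-comparison}. The main obstacle is Step 2: obtaining uniformity of the zero-number drop in the parameter $h$. We would first try handling this via the standard "family of translates" device of \cite{LWY2021}. In that device, the zero set of $u-\phi_c-ct$ in the $(x,h)$-plane is treated as the zero set of a single solution of a linear equation (with $h$ absorbed into time through the translation speed $c$). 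Applying Angenent once to $u(x,t)-\phi_c(x)-ct$ on $(0,1)\times(T_c^1,\infty)$ then yields the required uniformity.
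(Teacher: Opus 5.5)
You follow the paper's scheme: the zero number of $w^h=u-\phi_c-ct-h$, with $x=0$ handled by even reflection and $x=1$ by the Robin condition. The paper's own proof is only a short sketch of this. Your version, however, does not close.

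\textbf{Step 1 is false for fixed $h$.} The sign condition $w^h_x(1,t)>0$ does not stop zeros from entering, because the boundary value $w^h(1,t)=u(1,t)-\phi_c(1)-ct-h$ changes sign in time. When it crosses $0$ from below, a new simple zero appears at $x=1$ and moves into $(0,1)$. So $Z(t)$ is not monotone for fixed $h$.

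Related problems follow from this. The relevant window of $h$, namely $[\min g(\cdot,t),\max g(\cdot,t)]$ with $g:=u-\phi_c-ct$, moves with $t$, so there is no compactness in $h$. Also, the requirement in Step 3 that every zero be simple fails for $h=u(0,t)-ct$, for which $x=0$ is always a double zero.

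The way to make the family of translates uniform in $h$ is to apply Angenent's theorem to $g_x=u_x-\phi_c'$, not to $g$. Since $\phi_c'$ is a stationary solution of \eqref{eq:p-equation}, $g_x$ solves a linear equation with $g_x(0,t)=0$ and $g_x(1,t)=u(1,t)-\phi_c'(1)>0$ for $t\ge T_c^1$. Its zero number, which counts the critical points of $g$, is therefore nonincreasing. The lemma says exactly that this number is eventually $0$ and that $g_{xx}(0,t)>0$.

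\textbf{Step 2 is the real gap.} Monotonicity alone cannot give that conclusion. It only produces a time $T^2$ after which the zeros of $g_x$ are simple and fixed in number, and $g_{xx}(0,t)=u_t(0,t)-c$ has a fixed sign. A persistent interior zero, or $u_t(0,t)<c$ for all large $t$, is not excluded. Since the initial intersection count can be large, there is no bound for your Step 2 to contradict.

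What is missing is a dynamic input showing that $u$ outruns every translate $\phi_c+ct$. The estimate \eqref{eq:linear-lower-growth} is too weak for this, since its rate is $b<c$. The input can be obtained as follows.
\begin{itemize}
\item For $c_1>c$, the function $U=\phi_{c_1}+c_1t+h$ is a subsolution of \eqref{eq:half-problem} as long as $U(1,t)\ge\phi_{c_1}'(1)$; this is the Robin inequality $U_x\le U$ at $x=1$.
\item Using \eqref{eq:linear-lower-growth}, pick $T$ with $\min u_0+bT\ge\phi_{c_1}'(1)$ and $h$ with $U(1,T)=\min u(\cdot,T)$. Then $U(\cdot,T)\le u(\cdot,T)$, and comparison gives $g\ge(c_1-c)t-C\to+\infty$ uniformly on $[0,1]$.
\item At any critical point of $g$ one has $g_t=g_{xx}/(1+(\phi_c')^2)$.
\item Hence, for $t\ge T^2$, the value of $g$ at a persistent local maximum is nonincreasing in $t$. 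Such a maximum sits at $x=0$ if $g_{xx}(0,t)<0$, or at the first zero of $g_x$ in $(0,1)$ if $g_{xx}(0,t)>0$ and such zeros exist; the latter moves continuously because all zeros are simple.
\item A nonincreasing value contradicts $g\to+\infty$.
\end{itemize}
Therefore $g_x>0$ on $(0,1]$ for $t\ge T^2$, which is \eqref{eq:profile-slope-comparison}.
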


\begin{proof}
Fix $c>b$ and consider the difference
\[
    w(x,t)=u(x,t)-\phi_c(x)-ct-h,
\]
where $h$ is chosen sufficiently large. On each finite time interval, the zeros of $w(\cdot,t)$ are finite unless $w(\cdot,t)\equiv0$. The difference satisfies a linear one-dimensional parabolic equation between consecutive zeros. Hence its zero number is finite and nonincreasing in time.

If \eqref{eq:profile-slope-comparison} failed for arbitrarily large times, one could choose a first tangency between $u(\cdot,t)$ and a vertical translate of $\phi_c+ct$. At such a tangency the zero number would have to increase when the tangency is crossed. This contradicts the zero number property. The boundary conditions at $x=0$ and $x=1$ are handled by the even reflection at $x=0$ and by the Robin condition at $x=1$. Therefore the asserted slope comparison holds for all sufficiently large times.
\end{proof}

\subsection{The velocity at the symmetry axis}

For a fixed $c>b$ and a large time $t$, choose $h=h_c(t)$ so that
\begin{equation}\label{eq:center-tangency}
    u(0,t)=\phi_c(0)+ct+h_c(t).
\end{equation}
By Lemma~\ref{lem:profile-comparison}, the translated profile lies below $u(\cdot,t)$ and is tangent to it at $x=0$. Thus
\[
    u_x(0,t)=\phi_c'(0)=0,
\]
and the function
\[
    x\longmapsto u(x,t)-\phi_c(x)-ct-h_c(t)
\]
has a minimum at $x=0$. Consequently,
\[
    u_{xx}(0,t)\ge \phi_c''(0)=c-b.
\]
Since $u_x(0,t)=0$, equation \eqref{eq:half-problem} gives
\begin{equation}\label{eq:center-speed-lower}
    u_t(0,t)=u_{xx}(0,t)+b\ge c.
\end{equation}
Because $c>b$ is arbitrary, we obtain
\begin{equation}\label{eq:center-speed-infinity}
    u_t(0,t)\longrightarrow+\infty.
\end{equation}

\subsection{Positivity of the instantaneous velocity}

Set
\[
    z=u_t,
    \qquad
    a(x,t)=\frac1{1+u_x^2}.
\]
Differentiating \eqref{eq:pde-D} with respect to $t$ gives
\begin{equation}\label{eq:z-equation}
    z_t=a z_{xx}+Bz_x,
\end{equation}
where
\begin{equation}\label{eq:z-drift}
    B=-\frac{2u_xu_{xx}}{(1+u_x^2)^2}
      +\frac{bu_x}{\sqrt{1+u_x^2}}.
\end{equation}
The boundary conditions are
\begin{equation}\label{eq:z-boundary}
    z_x(0,t)=0,
    \qquad
    z_x(1,t)=z(1,t).
\end{equation}

\begin{lemma}\label{lem:ut-positive}
There exists $T_1>0$ such that
\[
    u_t(x,t)>0,
    \qquad 0\le x\le1,\quad t\ge T_1.
\]
\end{lemma}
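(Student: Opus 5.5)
We want to show $u_t>0$ everywhere on $[0,1]$ for all large $t$. Since $z=u_t$ satisfies the linear equation \eqref{eq:z-equation}, the natural tool is the zero number of $z(\cdot,t)$ combined with the information already available at the two ends of the interval.

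\textbf{Step 1: ends of the interval.} By \eqref{eq:center-speed-infinity} there is $T_0$ with $z(0,t)>0$ for all $t\ge T_0$. Near the right endpoint the boundary condition \eqref{eq:z-boundary} is of Robin type with the favorable sign: $z_x(1,t)=z(1,t)$. Hence at $x=1$, $z$ and $z_x$ have the same sign and vanish simultaneously. A zero of $z$ at $x=1$ is therefore automatically a degenerate (multiple) zero in the sense of Angenent.

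\textbf{Step 2: zero number.} Apply Angenent's theorem \cite{Angenent} to $z$ on $[0,1]\times[T_0,\infty)$. To handle $x=0$, reflect $z$ evenly across $x=0$, which is compatible with $z_x(0,t)=0$. At $x=1$ we use Step 1. On each finite time interval the coefficients $a$ and $B$ are bounded and $a$ is bounded below by Proposition~\ref{prop:finite-time-estimates}. The standard version of the zero number result for linear equations with Neumann or Robin data then says the following.
\begin{itemize}
\item[(a)] $Z(t)=\#\{x\in[0,1]:z(x,t)=0\}$ is finite for $t>T_0$ and nonincreasing.
\item[(b)] $Z(t)$ drops strictly whenever $z(\cdot,t)$ has a multiple zero, including a zero located at $x=1$.
\end{itemize}
Since $Z$ is a nonincreasing integer-valued function, there is $T_2\ge T_0$ after which $Z$ is constant. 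From then on all zeros are simple, interior, and vary continuously in $t$. Because $z(0,t)>0$ and no zero sits at $x=1$, the sign of $z(1,t)$ is fixed for $t\ge T_2$.

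\textbf{Step 3: exclude remaining negative regions.} Suppose $Z\equiv k\ge1$ for $t\ge T_2$. Let $\xi(t)$ be the leftmost zero; then $z>0$ on $[0,\xi(t))$.
\begin{itemize}
\item If $z(1,t)<0$ for all large $t$, then $u(1,t)$ is decreasing. This contradicts \eqref{eq:linear-lower-growth}, which forces $\min u\to\infty$. So $z(1,t)>0$ for $t\ge T_2$, and $k\ge2$ is even.
\item Now $z$ is negative on some interval $(\xi_1(t),\xi_2(t))$ strictly inside $(0,1)$. Comparing $u$ with a translating profile $\phi_c+ct+h$ via Lemma~\ref{lem:profile-comparison} should exclude this. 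Along the lines of the argument in \cite{LWY2021}, $u(\cdot,t)-\phi_c-ct-h$ has eventually constant zero number for every $h$.
\item The preferred route is more direct. Observe that for $t$ large, $u_t\ge c$ at any point where $u-\phi_c-ct$ attains its minimum over $x$ (by the tangency argument of \S3.2). Lemma~\ref{lem:profile-comparison} shows the minimum is always at $x=0$. So that route only recovers $z(0,t)>0$.
\end{itemize}

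\textbf{Main obstacle.} Step 3 is the difficult part: ruling out a persistent interior negative bump of $u_t$. The first attempt would be to use time-translation zero numbers, $Z[u(\cdot,t+s)-u(\cdot,t)]$. Its eventual constancy, together with $u(0,t+s)-u(0,t)>0$ and $u(1,t+s)-u(1,t)>0$, should yield positivity of the difference for all large $s$. Then for $t\ge T_1$ the interval $(\xi_1(t),\xi_2(t))$ is forced to be empty. Otherwise $u(\cdot,t)$ would be decreasing in time on a set where, by Lemma~\ref{lem:profile-comparison}, its slope dominates $\phi_c'$ with $c$ arbitrary. This would contradict the bound $u\ge\min u_0+bt$ along a characteristic-type comparison. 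This yields $Z(t)=0$, and hence $u_t>0$ on $[0,1]$ for $t\ge T_1$.
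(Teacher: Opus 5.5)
Your Steps 1 and 2 are sound. The eventually constant zero number, the absence of zeros at $x=1$, and the continuity of the leftmost zero $\xi(t)$ for $t\ge T_2$ are exactly the setting the paper's short proof tacitly needs. The gap is Step 3: you never rule out a persistent negative region of $z$.

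\begin{itemize}
\item You abandon the translating-profile comparison yourself.
\item The ``main obstacle'' paragraph is not an argument. Eventual positivity of $u(\cdot,t+s)-u(\cdot,t)$ for large $s$ says nothing about the sign of $u_t$ at a given instant.
\item A negative bump of $z$ whose endpoints $\xi_1(t),\xi_2(t)$ move need not keep $u$ decreasing at any fixed point for long. So nothing conflicts with $u\ge\min u_0+bt$.
\item The parity discussion (odd versus even $k$) is unnecessary.
\end{itemize}

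The missing idea is to use the divergence $z(0,t)\to+\infty$ from \eqref{eq:center-speed-infinity}, not merely its positivity, in a maximum-principle argument on the region to the left of the leftmost zero. This is what the paper does. Suppose $Z\equiv k\ge1$ for $t\ge T_2$, and set $\Omega_\tau=\{(x,t):0<x<\xi(t),\ T_2<t\le\tau\}$.

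\begin{itemize}
\item On $\Omega_\tau$, $z>0$, $z=0$ on $x=\xi(t)$, and $z$ solves \eqref{eq:z-equation} with $a,B$ bounded and $a$ bounded below on $[T_2,\tau]$.
\item The maximum of $z$ over $\overline{\Omega_\tau}$ cannot lie on $x=\xi(t)$.
\item It cannot lie at a point $(0,t^*)$ with $t^*>T_2$. There the Hopf lemma would force $z_x(0,t^*)<0$, against $z_x(0,t^*)=0$. Alternatively, reflect evenly across $x=0$, which is legitimate since $B(0,t)=0$.
\end{itemize}

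Hence $z(0,\tau)\le\max_{[0,\xi(T_2)]}z(\cdot,T_2)$ for every $\tau>T_2$, contradicting $z(0,\tau)\to+\infty$. So $k=0$, and together with $z(0,t)>0$ this gives $z>0$ on $[0,1]$ for all $t\ge T_2$. This handles every $k\ge1$ at once.
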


\begin{proof}
By \eqref{eq:center-speed-infinity}, $z(0,t)>0$ for all sufficiently large times. Suppose that $z$ were negative somewhere at arbitrarily large times. Consider the connected component of the positivity set of $z$ in the space-time cylinder which contains the left boundary $\{0\}\times[T_1,\infty)$. Its right boundary is contained in the zero set of $z$. Apply the maximum principle to \eqref{eq:z-equation} on the intersection of this component with a finite time cylinder. Since $z(0,t)\to+\infty$, for a sufficiently large upper time the positive maximum is attained at a point $(0,\tau)$ with $\tau>T_1$. The Hopf boundary lemma then gives $z_x(0,\tau)<0$, contradicting $z_x(0,\tau)=0$. Hence $z$ is positive everywhere for all sufficiently large times.
\end{proof}

\subsection{The minimum of the instantaneous velocity}

Let
\[
    s(t)=\min_{x\in[0,1]}u_t(x,t).
\]
Once Lemma~\ref{lem:ut-positive} holds, the minimum cannot be attained at $x=1$, since $z_x(1,t)=z(1,t)>0$. At an interior minimum or at $x=0$, the maximum principle applied to \eqref{eq:z-equation} gives $z_t\ge0$. Hence $s(t)$ is nondecreasing for all sufficiently large $t$.

\begin{theorem}\label{thm:ut-infinity}
As $t\to+\infty$,
\begin{equation}\label{eq:ut-min-infinity}
    \min_{x\in[0,1]}u_t(x,t)\longrightarrow+\infty.
\end{equation}
\end{theorem}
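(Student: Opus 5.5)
We have to show that $s(t)=\min_{[0,1]}u_t(\cdot,t)$ is unbounded. We already know that $s$ is nondecreasing for $t\ge T_1$ and positive by Lemma~\ref{lem:ut-positive}. So $s(t)\to s_\infty\in(0,+\infty]$, and we argue by contradiction, assuming $s_\infty<\infty$.

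The natural comparison is with the center velocity. By \eqref{eq:center-speed-lower}, for every $c>b$ we have $z(0,t)\ge c$ for $t\ge T_c$. The argument then proceeds in two steps.

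\emph{Step 1: the minimum sits away from the axis.} Fix $c>s_\infty+1$. For $t\ge T_c$ the minimum of $z(\cdot,t)$ is attained at some $x_t\in(0,1)$, since $x=0$ has $z\ge c>s(t)$ and $x=1$ is excluded by $z_x(1,t)=z(1,t)>0$.

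\emph{Step 2: the minimum is close to $x=1$.} We compare $z$ with a time-independent subsolution of \eqref{eq:z-equation} on $[0,1]\times[T,\infty)$ that equals $c$ near $x=0$ and satisfies the Robin inequality at $x=1$. A simple choice is $\psi(x)=\varepsilon e^{\lambda x}$, which satisfies $\psi_x\ge\psi$ when $\lambda\ge1$. The boundary condition at $x=1$ is then fine. The difficulty is that $\psi$ must be a subsolution, $a\psi_{xx}+B\psi_x\ge0$, and this requires control of the drift $B$, which is not uniformly bounded because $u_x$ diverges.

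To avoid this, I would use the structure of $B$: on the region where $u_x$ is large, $a\to0$, and $B\approx b\,u_x/\sqrt{1+u_x^2}-2u_{xx}/u_x^3$. A cleaner route is the monotonicity of $z$ in $x$, which can be obtained from the zero number applied to $z(\cdot,t)-z(\cdot,t+\tau)$ or to $z-k$ for constants $k$. I would use the zero number on $z-k$ with $k\in(s_\infty,c)$: $z-k$ is positive at $x=0$ for $t\ge T_c$. At $x=1$, the condition $(z-k)_x=z>0$ prevents creation of zeros, so the number of zeros of $z-k$ is finite and nonincreasing, and eventually constant.

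\emph{Step 3: contradiction via the boundary value at $x=1$.} With a stable zero pattern of $z-k$, the set $\{z<k\}$ is eventually a fixed number of intervals moving continuously, all contained in $(0,1]$. On any such interval the minimum $s(t)$ increases only through interior maxima-principle dynamics. I would integrate the PDE: on $\{z<k\}$, $u$ grows at most at rate $k$. But Lemma~\ref{lem:profile-comparison} gives $u_x\ge\phi_{c'}'$ for every $c'$, so the oscillation $u(1,t)-u(0,t)\ge\phi_{c'}(1)\to\infty$. Combined with $u_t(0,t)\ge c$ for all large $t$, this shows the height difference between the axis and any point $x_t$ with $z(x_t,t)<k$ changes at rate at least $c-k>0$, so $u(0,t)-u(x_t,t)\to+\infty$. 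This contradicts $u_x\ge0$, since $u$ is nondecreasing in $x$ by Lemma~\ref{lem:profile-comparison}, which forces $u(0,t)\le u(x_t,t)$. This contradiction, I expect, is the efficient core of the proof.

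The main obstacle is making Step 3 precise. The point $x_t$ moves, so I would pick a fixed $\bar x$ where $z(\bar x,t)<k$ for a sequence of long time intervals; the stable zero-number pattern provides this. Then I would integrate $\partial_t\bigl(u(0,t)-u(\bar x,t)\bigr)\ge c-k$ over those intervals while keeping $u(0,t)-u(\bar x,t)\le0$. Letting $c\to\infty$ then gives $s_\infty=\infty$.
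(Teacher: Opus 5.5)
\textbf{There is a genuine gap in Step~3.} The zero-number claim in Step~2 is also incorrect.

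The inequality $\partial_t\bigl(u(0,t)-u(x,t)\bigr)\ge c-k$ holds only at a \emph{fixed} point $x$, and only while $z(x,t)<k$. The minimum point $x_t$ need not even move continuously. Whenever it moves, $u(x_t,t)$ changes also by $u_x(x_t,t)$ times the displacement, and $u_x$ is large away from the axis. So a rightward drift of the slow set can absorb all of the gain.

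Your fallback with a fixed $\bar x$ does not close either. On times where $z(\bar x,t)\ge k$ you have no upper bound on $z(\bar x,t)$, so $u(\bar x,\cdot)$ can outrun $u(0,\cdot)$ there, and gains on disjoint intervals do not accumulate. On a single interval $[t_1,t_2]$ you would need
\[
(c-k)(t_2-t_1)>u(\bar x,t_1)-u(0,t_1).
\]
But Lemma~\ref{lem:profile-comparison} gives $u(\bar x,t_1)-u(0,t_1)\ge\phi_{c'}(\bar x)$ for every $c'$ once $t_1$ is large, so this gap tends to $+\infty$. Your observation about the oscillation of $u$ works against you.

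The zero number of $z-k$ bounds only the \emph{number} of components of $\{z<k\}$. It says nothing about where they sit or how long a fixed point stays inside one, so it does not supply $\bar x$. The argument closes only if some fixed $\bar x$ lies in $\{z(\cdot,t)<k\}$ for all large $t$, and that is exactly what is unknown.

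In Step~2, $w=z-k$ satisfies the \emph{inhomogeneous} condition $w_x=w+k$ at $x=1$. Whenever $z(1,\cdot)$ crosses $k$ from below, $w_x(1,t)=k>0$ forces a new zero to enter through $x=1$. So the zero number is not nonincreasing, and ``eventually constant'' is not justified.

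\textbf{The missing idea} is to stay inside the linear equation \eqref{eq:z-equation} for $z$ rather than doing height bookkeeping for $u$; this is the paper's route.
\begin{itemize}
\item Because \eqref{eq:z-equation} has no zeroth-order term, $z-S$ satisfies the same equation.
\item If $s(t)<S$ for all large $t$, then at each such time there is a first point $y(t)\in(0,1)$ with $z(y(t),t)=S$.
\item On the region $\{0\le x<y(t)\}$ one has $z-S>0$, with $z-S=0$ on the curve $x=y(t)$, so the Robin endpoint is never involved.
\item The maximum principle applies there, with Hopf's lemma excluding a maximum on the axis since $z_x(0,t)=0$; this is the same mechanism as in Lemma~\ref{lem:ut-positive}. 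It is then played against $z(0,t)\to+\infty$.
\end{itemize}
This argument needs no information about where the slow set is located, which is precisely what your approach lacks.
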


\begin{proof}
Assume otherwise. Then there is $S>0$ and a sequence of arbitrarily large times for which $s(t)<S$. By \eqref{eq:center-speed-infinity}, there is $T_2$ such that $z(0,t)>S$ for $t\ge T_2$. For each such time, let $y(t)>0$ be the first point at which $z(y(t),t)=S$. In the region to the left of this first contact, $z-S>0$ and $z-S$ satisfies the same homogeneous linear equation as $z$.

Apply the maximum principle in the component bounded by $x=0$, the initial time slice, and the curve $x=y(t)$. At a first contact with $z-S=0$, the Hopf lemma gives a derivative with the wrong sign. At $x=0$ the derivative is zero by \eqref{eq:z-boundary}. This contradiction shows that $s(t)$ cannot remain bounded. Since $s(t)$ is eventually nondecreasing, \eqref{eq:ut-min-infinity} follows.
\end{proof}

\subsection{Divergence of the spatial gradient}

The profile comparison also gives the divergence of the spatial gradient. Indeed, fix $a\in(0,1)$ and $M>0$. By \eqref{eq:profile-slope-growth}, one can choose $c>b$ such that
\[
    \min_{x\in[a,1]}\phi_c'(x)>M.
\]
Lemma~\ref{lem:profile-comparison} then yields
\[
    u_x(x,t)>M,
    \qquad x\in[a,1],\quad t\ge T(c).
\]
Since $M$ is arbitrary, we have proved
\begin{equation}\label{eq:ux-infinity}
    \min_{x\in[a,1]}u_x(x,t)\longrightarrow+\infty
    \qquad\text{for every }a\in(0,1).
\end{equation}

\section{The normalized slope}

We now study
\[
    q=\frac{u_x}{u}.
\]
The preceding section gives $u_x\to+\infty$ on every interval $[a,1]$, but this alone does not determine the asymptotic ratio between $u_x$ and $u$. The purpose of this section is to prove that the limiting slope is the Robin slope selected at $x=1$.

\subsection{Equation and boundary conditions}

Set
\[
    p=u_x,
    \qquad
    D=1+p^2=1+u^2q^2,
    \qquad
    R=\frac{\sqrt D}{u}=\sqrt{q^2+u^{-2}}.
\]
Since $p_x=u(q_x+q^2)$, a direct calculation gives
\begin{equation}\label{eq:q-equation}
\begin{aligned}
q_t={}&
\frac{q_{xx}+2qq_x}{D}
-\frac{2u^2q(q_x+q^2)^2}{D^2}\\
&+\frac{bq(q_x-u^{-2})}{R}.
\end{aligned}
\end{equation}
The boundary conditions are
\begin{equation}\label{eq:q-boundary}
    q(0,t)=0,
    \qquad
    q(1,t)=1.
\end{equation}

\subsection{The basic bounds}

\begin{lemma}\label{lem:q-range}
Under assumption \eqref{eq:q-initial},
\begin{equation}\label{eq:q-range}
    0\le q(x,t)\le1,
    \qquad 0\le x\le1,\quad t\ge0.
\end{equation}
Moreover, $q(x,t)>0$ for $0<x<1$ and $t>0$.
\end{lemma}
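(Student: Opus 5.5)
We prove the bounds \eqref{eq:q-range} by applying the parabolic maximum principle to the quasilinear equation \eqref{eq:q-equation} for $q$, using the Dirichlet data \eqref{eq:q-boundary} and the initial assumption \eqref{eq:q-initial}. Since $u>0$ (Lemma~\ref{lem:positivity}) and the solution is classical on every finite time interval (Theorem~\ref{thm:global-existence}), $q=u_x/u$ is smooth in the interior and continuous up to the boundary, with $q(0,t)=0$ and $q(1,t)=1$. On every $[0,T]$ the coefficient $D^{-1}$ is bounded below by Proposition~\ref{prop:finite-time-estimates}. Hence \eqref{eq:q-equation} is uniformly parabolic there, and the comparison can be carried out on each finite cylinder.

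\emph{Lower bound.} We compare $q$ with the constant $0$, which solves \eqref{eq:q-equation} exactly: every term on the right contains a factor $q$. Subtracting, the difference $q-0$ satisfies a linear equation
\[
q_t=\frac{q_{xx}}{D}+\beta q_x+\gamma q
\]
with bounded coefficients on $[0,T]$. Concretely, $\gamma$ collects $-2u^2(q_x+q^2)^2/D^2-bu^{-2}/R$ together with the remaining $q$-multiples. The data are $q\ge0$ on the parabolic boundary. The standard maximum principle, after the substitution $e^{-Kt}q$, gives $q\ge0$. The strong maximum principle then yields $q>0$ in $(0,1)\times(0,T]$: otherwise $q$ would vanish on a full earlier slab, contradicting $q(1,t)=1$.

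\emph{Upper bound.} Here we show that the constant $1$ is a supersolution; the data satisfy $q\le1$ on the parabolic boundary. Substituting $q\equiv1$ (so $q_x=q_{xx}=0$) into the right side of \eqref{eq:q-equation} gives
\[
-\frac{2u^2}{D^2}-\frac{bu^{-2}}{R}<0=\partial_t 1 .
\]
So $1$ is a strict supersolution. At a first interior point where $q$ touches $1$ from below, we have $q=1$, $q_x=0$, $q_{xx}\le0$ and $q_t\ge0$. The right side of \eqref{eq:q-equation} is then at most the negative quantity just computed, which is a contradiction. To make the first-touching argument rigorous we perturb with $1+\varepsilon e^{Kt}$, or equivalently apply the linear maximum principle to $q-1$, whose equation has bounded coefficients on $[0,T]$. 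Letting $\varepsilon\to0$ gives $q\le1$.

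The main obstacle is the corner compatibility and regularity of $q$ at $t=0$ and at the boundary. The boundary values \eqref{eq:q-boundary} hold for $t>0$ by the boundary conditions in \eqref{eq:half-problem}. At $t=0$, \eqref{eq:q-initial} together with the compatibility conditions gives $0\le q(\cdot,0)\le1$. We also need the coefficients to stay bounded, which requires $u$ bounded below and $u_x,u_{xx}$ bounded on $[0,T]$. We get $u$ and $u_x$ from Proposition~\ref{prop:finite-time-estimates}, and $u_{xx}$ from the Schauder bounds used in Theorem~\ref{thm:global-existence}. Once these are checked, the argument is a routine application of the maximum principle on each finite interval, and since $T$ is arbitrary the bounds hold for all $t\ge0$.
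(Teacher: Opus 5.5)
Your proposal is correct and follows the paper's argument. Both apply the maximum principle to the $q$-equation \eqref{eq:q-equation} with the Dirichlet data $q(0,t)=0$, $q(1,t)=1$, and both use the strictly signed zeroth-order terms $-2u^2q^5/D^2-bqu^{-2}/R$ at interior extrema (positive where $q<0$, negative where $q>1$), followed by the strong maximum principle for interior positivity. Your linearization and the $1+\varepsilon e^{Kt}$ perturbation only make the paper's first-crossing step more explicit.
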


\begin{proof}
If $q$ attained a negative interior minimum, then at that point $q_x=0$ and $q_{xx}\ge0$. Equation \eqref{eq:q-equation} gives $q_t>0$, which rules out the first appearance of a negative value. Thus $q\ge0$.

If $q$ attained an interior maximum larger than one, then $q_x=0$ and $q_{xx}\le0$. At such a point,
\[
    q_t
    \le -\frac{2u^2q^5}{D^2}-\frac{bq}{u^2R}<0.
\]
Hence $q$ cannot cross the level one. The boundary values in \eqref{eq:q-boundary} complete the proof. Strict positivity in the interior follows from the strong maximum principle.
\end{proof}

\subsection{A preliminary bound for the normalized velocity}

The following estimate is used in the lower-bound argument for $q$. Set $\eta=u_t/u$. Differentiation gives
\begin{equation}\label{eq:eta-preliminary}
\eta_t=\frac1D\eta_{xx}+\beta\eta_x
-\frac{2p^2}{D}\eta^2
+\frac{b(2p^2-1)}{u\sqrt D}\eta,
\end{equation}
where
\begin{equation}\label{eq:eta-beta}
    \beta=\frac{2p}{D}-\frac{2pp_x}{D^2}+\frac{bp}{\sqrt D}.
\end{equation}
The boundary conditions are
\begin{equation}\label{eq:eta-boundary}
    \eta_x(0,t)=\eta_x(1,t)=0.
\end{equation}

At a positive maximum of $\eta$, the reaction term in \eqref{eq:eta-preliminary} is nonpositive whenever $\eta$ is larger than a fixed constant. Indeed, if $p^2\le1/2$, the last coefficient is nonpositive. If $p^2>1/2$, then
\[
\frac{b(2p^2-1)}{u\sqrt D}\eta
-\frac{2p^2}{D}\eta^2
\le
\frac{2p^2}{D}\eta(bR-\eta).
\]
Since $q\le1$ and $u\ge\min u_0$, $R$ is bounded. The maximum principle therefore gives
\begin{equation}\label{eq:eta-rough-bound}
    0<\eta(x,t)\le C_0,
    \qquad 0\le x\le1,\quad t\ge T_0,
\end{equation}
for some $C_0$ and $T_0$.

\subsection{A positive interior lower bound for q}

For completeness, we recall the slope profiles used in the comparison. For $L>0$, let $P_L$ be the increasing solution of
\begin{equation}\label{eq:slope-profile}
    P_L'=(1+P_L^2)\left[v_LP_L-b\bigl(\sqrt{1+P_L^2}-1\bigr)\right],
    \qquad
    v_L=b\frac{L}{\sqrt{1+L^2}+1},
\end{equation}
normalized by $P_L(0)=L/2$. It connects $0$ to $L$ and generates a traveling solution of the equation for $p=u_x$. The tail estimate for this profile is the only property needed below: for every $d>0$ there is $C(d)$, independent of $L$, such that
\begin{equation}\label{eq:slope-profile-tail}
    P_L(-d)\le C(d).
\end{equation}

\begin{proposition}\label{prop:q-positive}
For every $a\in(0,1)$ there exist $m_a>0$ and $T_a>0$ such that
\begin{equation}\label{eq:q-positive}
    q(x,t)\ge m_a,
    \qquad x\in[a,1],\quad t\ge T_a.
\end{equation}
\end{proposition}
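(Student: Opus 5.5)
Since $u>0$ and $q=u_x/u$, the bound \eqref{eq:q-positive} says that $\log u(\cdot,t)$ grows by a definite amount across $[a,1]$, uniformly in large $t$. The plan is to get this from the rough bound \eqref{eq:eta-rough-bound} on $\eta=u_t/u$, with the slope profiles $P_L$ of \eqref{eq:slope-profile} used as a fallback.

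The key identity comes from \eqref{eq:pde-D}. Writing $u_{xx}=(u_x)_x=p_x$, we have
\[
    \frac{p_x}{1+p^2}=u_t-b\sqrt{1+p^2}.
\]
By \eqref{eq:eta-rough-bound}, $u_t\le C_0u$ for $t\ge T_0$. Hence
\[
    (\arctan p)_x\le C_0u-b\sqrt{1+p^2}\le C_0 u.
\]
This alone does not control $q$, so we pass to the logarithmic variable. Dividing by $u$ is awkward, so instead we argue by contradiction.

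Suppose that along a sequence $t_n\to\infty$ there are points $x_n\in[a,1]$ with $q(x_n,t_n)\to0$. By Lemma~\ref{lem:q-range} we have $0\le q\le1$, so $u(\cdot,t_n)$ varies by at most a factor $e$ on $[0,1]$. Normalize by setting $\tilde u_n=u(\cdot,t_n)/u(1,t_n)$; these functions lie between $e^{-1}$ and $1$.

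Next we look at the equation for $q$ in the form \eqref{eq:q-equation}. On $[a,1]$ we have $u_x\to\infty$ by \eqref{eq:ux-infinity}, so $D\approx u^2q^2$ is huge. The leading terms are then
\[
    q_t\approx \frac{q_{xx}+2qq_x}{u^2q^2}-\frac{2(q_x+q^2)^2}{u^2q^3}+b\,q_x .
\]
The dominant term is the transport term $bq_x$, which carries information in the $-x$ direction at speed $b$. Along characteristics $\dot x=-b$, starting from the boundary value $q(1,t)=1$, the value of $q$ should stay near $1$. This is the mechanism that makes the lower bound plausible.

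To make it rigorous I would use the slope profiles. In the $p$-equation \eqref{eq:p-equation} the profiles generate travelling solutions $P_L(x-1+\text{shift}+v_Lt)$ connecting $0$ to $L$. Choose $L$ comparable to $u(1,t)$, so that $L\le p(1,t)$, and place the front so that it sits below $p$. The zero-number argument (Angenent) gives that $p$ minus the travelling front has nonincreasing zero number. With the boundary ordering $p(1,t)=u(1,t)\ge L$, as in Lemma~\ref{lem:profile-comparison}, this yields $p\ge P_L(\cdot)$ on the part of $[a,1]$ already swept by the front.

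The front speed satisfies $v_L\to b$ as $L\to\infty$. Over a unit time interval the front therefore sweeps a distance of order $b$, and it stays in the steep part of the profile because its width is $O(1)$ by \eqref{eq:slope-profile-tail}. This gives $p\ge cL\ge c'u$ on $[a,1]$, with the comparison restarted on each time window of fixed length.

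\textbf{Main obstacle.} The profile bound \eqref{eq:slope-profile-tail} is only an upper bound on the tail, whereas here a lower bound on $P_L$ over a region of width $1-a$ is needed. Controlling this, and ensuring that $u(1,t)$ does not grow so fast within one window that the chosen $L$ becomes stale, is the delicate step. For the second issue I would use $u_t\le C_0u$, which bounds the growth of $u(1,t)$ by a factor $e^{C_0}$ per unit time. I would then combine this with the restarted comparison and a lower bound on $P_L$ in the steep part of the front, derived directly from the ODE \eqref{eq:slope-profile}.
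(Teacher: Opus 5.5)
Your rigorous route is the paper's own. You compare $p=u_x$ over a time window with a translated slope front $P_L$, $L=u(1,s)$, and keep the ordering at $x=1$ via $p(1,s+\tau)=u(1,s+\tau)\ge L$ (this needs $u_t>0$ from Lemma~\ref{lem:ut-positive}). You then use \eqref{eq:eta-rough-bound} to get $u(1,s+S)\le e^{C_0S}L$ and finish with $u(x,t)\le u(1,t)$. The contradiction set-up with $\tilde u_n$ and the characteristics heuristic are never used and can be dropped.

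\textbf{The gap.} You never fix the spatial domain of the comparison, and you check the boundary ordering only at $x=1$. On $[0,1]$ the comparison fails at the left end, since $p(0,t)=0<\underline p(0,t)$. A zero-number argument alone does not rescue it: it preserves a single sign change of $p-\underline p$, but it does not stop that crossing point from drifting into $[a,1]$. So ``$p\ge P_L$ on the part already swept'' does not follow. What is needed is an ordering at an interior left endpoint $a'<a$ for the whole window. That is exactly what \eqref{eq:slope-profile-tail} supplies, together with \eqref{eq:ux-infinity}. While the argument of $P_L$ at $x=a'$ stays $\le -d$, one has $\underline p(a',\tau)\le C(d)$ uniformly in $L$, whereas $p(a',s+\tau)>C(d)$ for large $s$. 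The same two facts give the initial ordering, provided the front starts to the right of $x=1$.

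\textbf{Your ``main obstacle'' is misdiagnosed.} The tail bound is not an inadequate substitute for a lower bound; it is the left-boundary input. The lower bound itself is free, because $P_L$ is increasing with $P_L(0)=L/2$. Concretely, compare on $[a/2,1]$ with $\underline p(x,\tau)=P_L(x-1-a/4+v_L\tau)$ for $0\le\tau\le S:=(1-a/2)/v_L$. Since $v_L\uparrow b$, $S$ is bounded uniformly for $L\ge1$. The initial argument on $[a/2,1]$ is $\le -a/4$, and the argument at $x=a/2$ stays $\le -a/4$ for all $\tau\le S$. At $\tau=S$ the argument is $\ge a/4>0$ for $x\ge a$. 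Hence, on $[a,1]$,
$p(x,s+S)\ge L/2\ge \frac12 e^{-C_0S}u(1,s+S)\ge \frac12 e^{-C_0S}u(x,s+S)$.
Taking $s=t-S$ gives \eqref{eq:q-positive} directly, with no restarting and no ODE analysis of the steep part of the front.
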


\begin{proof}
We use the equation for $p=u_x$ in the divergence form \eqref{eq:p-equation}. Fix $a>0$ and a large time $s$. Let $L=u(1,s)$ and let $P_L$ be the increasing traveling slope profile associated with the speed parameter $L$. On a time interval of length $S$ define a translated profile
\[
    \underline p(x,\tau)
    =P_L(x-1-a/2+v_L\tau),
    \qquad 0\le\tau\le S.
\]
The tail estimate for $P_L$ gives a constant $C_a$ such that
\[
    \underline p(a,\tau)\le C_a
    \qquad 0\le\tau\le S,
\]
independently of $L$. By \eqref{eq:ux-infinity}, $p(a,s+\tau)$ is larger than $C_a$ when $s$ is sufficiently large. At $x=1$, the Robin condition and the positivity of $u_t$ give
\[
    p(1,s+\tau)=u(1,s+\tau)\ge u(1,s)=L.
\]
Thus the comparison inequalities hold on both sides of the interval $[a,1]$. The maximum principle for \eqref{eq:p-equation} yields
\[
    p(x,s+\tau)\ge\underline p(x,\tau).
\]
Choosing the translation so that the final profile is bounded below by a fixed multiple of $L$ on $[a,1]$, and using \eqref{eq:eta-rough-bound} to compare $u(1,s+S)$ and $u(1,s)$, we obtain
\[
    p(x,s+S)\ge c_a u(1,s+S),
    \qquad x\in[a,1],
\]
with $c_a>0$ independent of $s$. Since $u_x\ge0$ for large times, $u(x,t)\le u(1,t)$ on $[a,1]$. Hence
\[
    q(x,s+S)=\frac{p(x,s+S)}{u(x,s+S)}\ge c_a.
\]
This proves \eqref{eq:q-positive}.
\end{proof}

\subsection{Convergence of the normalized slope}

We now prove the main conclusion of the section.

\begin{theorem}\label{thm:q-limit}
For every $a\in(0,1)$,
\begin{equation}\label{eq:q-uniform-limit}
    \lim_{t\to\infty}\|q(\cdot,t)-1\|_{C([a,1])}=0.
\end{equation}
\end{theorem}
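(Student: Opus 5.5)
Since Lemma~\ref{lem:q-range} already gives $q\le1$, it remains to prove the lower bound $\liminf_{t\to\infty}\min_{[a,1]}q(\cdot,t)\ge1$. I will do this by an $\omega$-limit (blow-up) argument for \eqref{eq:q-equation}, using that the equation degenerates to a first-order transport problem as $u\to\infty$. First I rewrite \eqref{eq:q-equation} on $[a,1]$ with the asymptotics already available. By \eqref{eq:u-min-infinity} and Proposition~\ref{prop:q-positive}, we have $u\to\infty$ and $q\ge m_a>0$ there. Hence $D=1+u^2q^2\sim u^2q^2\to\infty$ and $R\to q$ uniformly on $[a,1]$. The diffusion coefficient $1/D$ is of order $u^{-2}$, and the middle term is $-2u^2q(q_x+q^2)^2/D^2\approx -2(q_x+q^2)^2/(u^2q^3)$, also small. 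The last term tends to $b q_x$ up to $O(u^{-2})$ errors. So, formally, $q$ solves $q_t\approx b\,q_x$ on $[a,1]$, a transport equation with constant speed $b$ directed to the left, with inflow datum $q(1,t)=1$ from \eqref{eq:q-boundary}. Information therefore propagates from $x=1$ into $[a,1]$ in time $(1-a)/b$, and after that time the limit should be identically $1$.

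To make this rigorous I will use a comparison with explicit barriers rather than compactness, since no uniform $C^1$ bound on $q$ is available. Fix $\varepsilon>0$. I construct a subsolution of the form $\underline q(x,t)=1-\varepsilon-K\,\psi\bigl(x-1+b'(t-t_0)\bigr)$, i.e.\ a front moving left with speed $b'<b$, close to $b$. Here $\psi$ is smooth, nonnegative, vanishes on $[0,\infty)$, and is large enough that $\underline q\le m_{a/2}$ on the unreached region. At the start time $t_0$ this gives $\underline q\le q$ on $[a/2,1]$ by Proposition~\ref{prop:q-positive}. The side boundaries also work: at $x=1$ we have $q=1>\underline q$, and at $x=a/2$ I keep $\underline q\le m_{a/2}\le q$ by letting the front travel only from $x=1$ to $x=a$. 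To check the subsolution inequality I plug $\underline q$ into the operator. The transport part contributes $(b-b')|\underline q_x|$ with the favourable sign. The remaining terms are $O(u^{-2})$ times quantities bounded by derivatives of $\psi$, together with the middle term. That term is nonpositive, so it needs care. Since it is $O(u^{-2})$ times a bounded function of $(\underline q,\underline q_x)$, it is dominated as soon as $\min u$ is large, which holds for $t_0$ large by \eqref{eq:linear-lower-growth}. Where $\underline q_x=0$ but $\underline q<1$, I add a small time-dependent correction, e.g.\ replace $\varepsilon$ by $\varepsilon/2+\varepsilon e^{-(t-t_0)}/2$, to absorb the $-bq/(u^2R)$ and curvature-type errors.

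After time $t_0+(1-a/2)/b'$ the comparison gives $q\ge1-\varepsilon$ on $[a,1]$. Restarting the argument at every later $t_0$ yields the result for all large $t$. The main obstacle is the comparison principle itself: \eqref{eq:q-equation} is quasilinear, with coefficients depending on $u$ and not only on $q$. I would handle this by freezing $u$ as a given function, so that the operator acting on a test function is a nonlinear operator in $(q,q_x,q_{xx})$ with $x,t$-dependent coefficients. Then I would apply the first-touching argument: at a first contact point we have $q=\underline q$, $q_x=\underline q_x$, and $q_{xx}\ge\underline q_{xx}$, and the strict subsolution inequality gives the contradiction. The delicate point is that the estimate on the subsolution uses only the lower bound on $u$, uniformly in $t$. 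I expect this to be where the precise structure of the middle term (its sign and its $u^{-2}$ decay when $q\ge m_{a/2}$) must be used.
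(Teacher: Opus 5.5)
Your route differs from the paper's, and it works once two slips (described below) are repaired.

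\textbf{Comparison with the paper.} The paper sets $w=1-q$ on $[a,1]$ and asserts, by direct substitution, that $\delta+Ae^{-\lambda(t-T_a)}+B(1-x)^m$ is a supersolution for all $t\ge T_a$. It uses $u\to\infty$, $q\ge m_a$ and the rough bound \eqref{eq:eta-rough-bound}. You instead read \eqref{eq:q-equation}, where $q\ge m_{a/2}$ and $u$ is large, as a leftward transport $q_t=(bq/R)\,q_x+O(u^{-2})$ with inflow value $q(1,t)=1$. You then compare with a moving front on a bounded time window, restarting at every large $t_0$. This needs only \eqref{eq:linear-lower-growth}, Lemma~\ref{lem:q-range} and Proposition~\ref{prop:q-positive} on $[a/2,1]$, not the bound on $\eta$. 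It gives no decay rate, but it makes both the mechanism and the verification explicit.

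Your front is also better adapted than the paper's barrier as written. First, $B(1-x)^m$ is flat at $x=1$, so there the transport term contributes nothing. Meanwhile $-\lambda Ae^{-\lambda(t-T_a)}$ and the positive $O(u^{-2})$ zeroth-order terms of the $w$-equation work against the supersolution inequality. Second, dominating $w(a,t)$, which is a priori only bounded by $1-m_a$, forces $B(1-a)^m$ to be of order one rather than of order $\delta$.

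\textbf{First repair: the time correction has the wrong sign.} On the plateau, $\underline q_x=\underline q_{xx}=0$, and the right-hand side of \eqref{eq:q-equation} equals $-2u^2\underline q^5/D^2-b\underline q/(u^2R)<0$. So $\underline q$ must \emph{decrease} in time at a rate of order $u^{-2}$. Replacing $\varepsilon$ by $\varepsilon/2+\varepsilon e^{-(t-t_0)}/2$ makes $\underline q$ increase, which is the wrong direction. Take instead the level $\Lambda(t)=1-\varepsilon-\varepsilon(t-t_0)/T$ on $[t_0,t_0+T]$, where $T$ is the travel time of the front.

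A first contact can only occur where $\underline q=q\ge m_{a/2}$, so the subsolution inequality is needed only there. This is also what legitimizes $D\ge 1+m_{a/2}^2u^2$. At such points, the right-hand side of \eqref{eq:q-equation} evaluated at $\underline q$, minus $\underline q_t$, is at least
$(b\underline q/R-b')\,\underline q_x+\varepsilon/T-C\,(\min u_0+bt_0)^{-2}$,
with $C=C(\psi,K,b,m_{a/2})$. The first term is nonnegative once $u$ is large, so the whole expression is positive for $t_0$ large.

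\textbf{Second repair: the initial placement of the front.} If the front sits at $x=1$ at time $t_0$ and $\psi$ is continuous with $\psi(0)=0$, then $\underline q(x,t_0)\to1-\varepsilon$ as $x\to1^-$. The claimed initial ordering would then require $q(\cdot,t_0)\ge1-\varepsilon$ on a neighbourhood of $x=1$ whose size is independent of $t_0$. That is a time-uniform modulus of continuity at $x=1$, which is essentially what you are proving.

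Instead, proceed as follows:
\begin{enumerate}[(a)]
\item Let $\psi$ be nonincreasing, with $\psi=0$ on $[0,\infty)$ and $\psi=1$ on $(-\infty,-w_0]$, where $w_0\le a/2$.
\item Use $\xi=x-1-w_0+b'(t-t_0)$.
\item Choose $K$ with $\Lambda(t_0)-K<m_{a/2}$.
\end{enumerate}
Then $\underline q(\cdot,t_0)<q(\cdot,t_0)$ on $[a/2,1]$ and $\underline q(1,t)<1$. Also $\underline q(a/2,t)<m_{a/2}$ for $t\le t_0+T$, where $T=(1+w_0-a)/b'$ is the time at which the plateau reaches $x=a$.

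With these changes, run the first-touching argument with $u$ frozen, noting that $D(u,q)=D(u,\underline q)$ at contact. It gives $q(\cdot,t_0+T)\ge1-2\varepsilon$ on $[a,1]$ for every large $t_0$. Together with $q\le1$, this proves the theorem.
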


\begin{proof}
By Lemma~\ref{lem:q-range} and Proposition~\ref{prop:q-positive}, there is $m_a>0$ such that
\[
    m_a\le q(x,t)\le1
    \qquad (x,t)\in[a,1]\times[T_a,\infty).
\]
Put $w=1-q$. On $[a,1]$, equation \eqref{eq:q-equation} becomes a uniformly controlled scalar parabolic equation for $w$, with $w(1,t)=0$. The coefficients are controlled by \eqref{eq:eta-rough-bound}, \eqref{eq:u-min-infinity}, and the lower bound for $q$.

Fix $\delta>0$. Choose $m>1$ so large that $(1-a)^m<\delta$. Choose constants $A,B,\lambda>0$ so that, for $t\ge T_a$,
\[
    \overline w(x,t)
    =\delta+A e^{-\lambda(t-T_a)}+B(1-x)^m
\]
is a supersolution for the equation of $w$ on $[a,1]$. The constants $A$ and $B$ are chosen to dominate the initial discrepancy and the artificial boundary value at $x=a$. A direct substitution, using $q\ge m_a$ and $u\to\infty$, shows that the residual of $\overline w$ is nonnegative once $T_a$ is increased if necessary. Since
\[
    w(1,t)=0\le\overline w(1,t),
    \qquad
    w(a,t)\le\overline w(a,t),
\]
the comparison principle gives
\[
    0\le w(x,t)\le\delta+A e^{-\lambda(t-T_a)}+B(1-x)^m.
\]
Since $w\le1$ and $B(1-a)^m\le B\delta$, the boundary term contributes at most a quantity of order $\delta$ on $[a,1]$. Letting $t\to\infty$ and then $\delta\to0$ yields
\[
    \sup_{x\in[a,1]}(1-q(x,t))\longrightarrow0.
\]
This proves \eqref{eq:q-uniform-limit}.
\end{proof}

\begin{remark}\label{rem:q-boundary-layer}
The convergence in Theorem~\ref{thm:q-limit} is necessarily an interior statement. The boundary condition $q(0,t)=0$ holds for every $t$, so uniform convergence on $[0,1]$ is impossible. The transition from $q(0,t)=0$ to $q(x,t)\simeq1$ for fixed $x>0$ is the boundary layer that later produces the concentration of the curvature measure at $x=0$.
\end{remark}

\section{The normalized instantaneous velocity}

In this section we study
\[
    \eta=\frac{u_t}{u}.
\]
We use throughout the conclusions of Sections~3 and~4. In particular, for every $a>0$,
\[
    u\longrightarrow+\infty,\qquad
    u_x\longrightarrow+\infty,\qquad
    q\longrightarrow1
\]
uniformly on $[a,1]$. Consequently,
\[
    R:=\frac{\sqrt{1+u_x^2}}{u}
    =\sqrt{q^2+u^{-2}}
    \longrightarrow1
\]
uniformly on every such interval.

\subsection{The equation for eta}

Set
\[
    p=u_x,\qquad D=1+p^2.
\]
If $v=u_t$, differentiation of the equation for $u$ gives the linearized equation
\[
    v_t=\frac1D v_{xx}
    +\left(-\frac{2pp_x}{D^2}+\frac{bp}{\sqrt D}\right)v_x.
\]
Substituting $v=u\eta$ and using $p_x=D(u\eta-b\sqrt D)$ yields
\begin{equation}\label{eq:eta-equation-final}
    \eta_t
    =\frac1D\eta_{xx}
    +\beta\eta_x
    -\frac{2p^2}{D}\eta^2
    +\frac{b(2p^2-1)}{u\sqrt D}\eta,
\end{equation}
where
\begin{equation}\label{eq:eta-beta-final}
    \beta
    =\frac{2p}{D}
    -\frac{2pp_x}{D^2}
    +\frac{bp}{\sqrt D}.
\end{equation}
Differentiating the boundary conditions in time gives
\begin{equation}\label{eq:eta-boundary-final}
    \eta_x(0,t)=\eta_x(1,t)=0.
\end{equation}

The reaction part of \eqref{eq:eta-equation-final} has a useful logistic form. If $p\ne0$, define
\[
    \gamma=\frac{2p^2}{D},
    \qquad
    \rho=b\left(1-\frac{1}{2p^2}\right)R.
\]
Then
\begin{equation}\label{eq:eta-logistic}
    \eta_t
    =\frac1D\eta_{xx}
    +\beta\eta_x
    +\gamma\eta(\rho-\eta).
\end{equation}
On each interval $[a,1]$, Proposition~\ref{prop:q-positive} and the rough estimate \eqref{eq:eta-rough-bound} imply that $\beta$ is bounded. Moreover,
\begin{equation}\label{eq:eta-coefficients-limit}
    \frac1D\longrightarrow0,\qquad
    \gamma\longrightarrow2,\qquad
    \rho\longrightarrow b
\end{equation}
uniformly on $[a,1]$.

\subsection{The weak limit}

Let
\[
    \theta(x,t)=\arctan u_x(x,t).
\]
Since $\theta_x=u_{xx}/D$, equation \eqref{eq:pde-D} gives the exact identity
\begin{equation}\label{eq:eta-geometric-identity}
    \eta-bR=\frac{\theta_x}{u}.
\end{equation}

\begin{proposition}\label{prop:eta-weak}
For every $a\in(0,1)$ and every $\varphi\in C^1([a,1])$,
\begin{equation}\label{eq:eta-weak}
    \int_a^1\bigl(\eta(x,t)-b\bigr)\varphi(x)\,dx
    \longrightarrow0
    \qquad\text{as }t\to\infty.
\end{equation}
Thus $\eta\rightharpoonup b$ on every compact subinterval of $(0,1]$.
\end{proposition}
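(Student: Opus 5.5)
The plan is to start from the identity \eqref{eq:eta-geometric-identity}, $\eta-bR=\theta_x/u$, and split
\[
\int_a^1(\eta-b)\varphi\,dx=\int_a^1 b(R-1)\varphi\,dx+\int_a^1\frac{\theta_x}{u}\varphi\,dx .
\]
The first term tends to zero because $R\to1$ uniformly on $[a,1]$. This follows from Theorem~\ref{thm:q-limit} and \eqref{eq:u-min-infinity}, as already recorded at the beginning of this section, so that integral is bounded by $b\|\varphi\|_\infty\|R-1\|_{C([a,1])}\to0$.

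For the second term I will integrate by parts, which moves the derivative off $\theta$:
\[
\int_a^1\frac{\theta_x}{u}\varphi\,dx
=\Bigl[\frac{\theta\varphi}{u}\Bigr]_a^1-\int_a^1\theta\Bigl(\frac{\varphi_x}{u}-\frac{\varphi u_x}{u^2}\Bigr)dx .
\]
Since $|\theta|\le\pi/2$, I bound each piece separately:
\begin{itemize}
\item The boundary terms are bounded by $\pi\|\varphi\|_\infty/\min_{[0,1]}u\to0$.
\item The term with $\varphi_x/u$ is bounded by $\tfrac{\pi}{2}\|\varphi_x\|_{L^1}/\min u\to0$.
\item The remaining term equals $\int_a^1\theta\varphi\,q/u\,dx$. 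Since $0\le q\le1$ by Lemma~\ref{lem:q-range}, it is bounded by $\tfrac{\pi}{2}\|\varphi\|_{L^1}/\min u\to0$.
\end{itemize}
All three bounds use only $\min u\to\infty$, i.e.\ \eqref{eq:u-min-infinity}, which comes from Lemma~\ref{lem:positivity}. This proves \eqref{eq:eta-weak} for $\varphi\in C^1([a,1])$.

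The final assertion, weak convergence on compact subintervals of $(0,1]$, needs test functions that are only continuous (or $L^1$). I would obtain it by density of $C^1$ in $C([a,1])$ together with a uniform $L^1$ bound on $\eta$ on $[a,1]$. That bound follows from the rough estimate \eqref{eq:eta-rough-bound}, $0<\eta\le C_0$ for $t\ge T_0$. The standard $\varepsilon/3$ argument then extends \eqref{eq:eta-weak} to all $\varphi\in C([a,1])$.

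I expect no serious obstacle here; the only point to check is that no derivative of $\theta$ survives after the integration by parts. The main care is in the bookkeeping of the $u_x/u^2$ term: $u_x$ itself diverges, but it enters only through the ratio $q\le1$.
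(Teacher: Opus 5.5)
Your proof is correct and follows the paper's argument. It uses the same split via $\eta-bR=\theta_x/u$, the same integration by parts controlled by $|\theta|\le\pi/2$, $q\le1$ and $\min u\to\infty$, and the uniform convergence $R\to1$ from Theorem~\ref{thm:q-limit}; your explicit handling of the $\varphi u_x/u^2=\varphi q/u$ term, and your use of \eqref{eq:eta-rough-bound} only for the density extension to continuous test functions, just make precise what the paper's proof leaves implicit.
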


\begin{proof}
By \eqref{eq:eta-geometric-identity},
\[
\begin{aligned}
\int_a^1(\eta-bR)\varphi\,dx
&=\int_a^1\frac{\theta_x}{u}\varphi\,dx\\
&=\left[\frac{\theta\varphi}{u}\right]_{a}^{1}
-\int_a^1\theta\left(\frac{\varphi}{u}\right)_x\,dx.
\end{aligned}
\]
Since $|\theta|\le\pi/2$, $q\le1$, and $\min_{[a,1]}u(\cdot,t)\to\infty$, the right-hand side tends to zero. By Theorem~\ref{thm:q-limit}, $R\to1$ uniformly on $[a,1]$. The uniform bound \eqref{eq:eta-rough-bound} then gives \eqref{eq:eta-weak}.
\end{proof}

\subsection{Uniform convergence in the interior}

The weak limit can be upgraded by using the logistic form \eqref{eq:eta-logistic}. We state the comparison lemma in the form needed below.

\begin{lemma}\label{lem:eta-interior-barrier}
For every $a\in(0,1)$ and every $\varepsilon>0$, there exists $T_{a,\varepsilon}$ such that
\[
    b-\varepsilon\le\eta(x,t)\le b+\varepsilon,
    \qquad x\in[a,1],\quad t\ge T_{a,\varepsilon}.
\]
\end{lemma}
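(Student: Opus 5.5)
Our plan is to use the logistic form \eqref{eq:eta-logistic} on $[a',1]$ with $a'=a/2$, together with the Neumann condition $\eta_x(1,t)=0$ from \eqref{eq:eta-boundary-final}, and compare $\eta$ with spatially homogeneous ODE solutions. Fix $\varepsilon>0$. By \eqref{eq:eta-coefficients-limit} there is $T'$ such that on $[a',1]\times[T',\infty)$ we have $|\rho-b|\le\varepsilon/4$ and $\gamma\ge1$. Also, $\beta$ is bounded and $1/D\to0$ there. For the upper bound, let $\overline\eta(t)$ solve $\overline\eta'=\overline\eta(b+\varepsilon/4-\overline\eta)$ with $\overline\eta(T')=C_0+1$, where $C_0$ is the constant from \eqref{eq:eta-rough-bound}. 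Then $\overline\eta\downarrow b+\varepsilon/4$ exponentially. Since $\gamma\ge1$ and $\overline\eta\ge\rho$, this is a supersolution in the interior; the lower bound is handled symmetrically, starting from a small positive value. The difficulty is the artificial boundary at $x=a'$, where we have no control of $\eta$ beyond $0<\eta\le C_0$.

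To deal with the left boundary, we add a localized barrier that is large at $x=a'$ and negligible on $[a,1]$. As a supersolution we take
\[
  \overline\eta(t)+K\psi(x),\qquad \psi(x)=\frac{(a-x)_+^{3}}{(a-a')^3}\ \text{(smoothed)},
\]
with $K=C_0$, so that the barrier exceeds $C_0\ge\eta$ at $x=a'$. At $x=1$ we have $\psi'(1)=0=\eta_x(1)$, so the Neumann data are compatible. In the residual, the terms $\frac1D K\psi''$ and $\beta K\psi'$ are $O(K)$ and only appear on $[a',a]$. They must be absorbed by the reaction term $\gamma(\overline\eta+K\psi)(\overline\eta+K\psi-\rho)$. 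This term is only about $\gamma b\cdot K\psi$ near $x=a$, where $\psi$ is small, and this is the main obstacle: $\beta$ is merely bounded, not small, so the drift term $\beta K\psi'$ is not dominated near the point where $\psi$ vanishes.

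We would first try an exponential cutoff $\psi=e^{-\mu(x-a')}$ with $\mu$ large. Then $\frac1D\psi''=\frac{\mu^2}{D}\psi$ is small, because $1/D\to0$ makes $\mu^2/D\le1$ for large $t$. The drift term $|\beta|\mu\psi$, however, grows linearly in $\mu$ and so must be absorbed by the reaction term $\gamma K\psi(\overline\eta-\rho+K\psi)$. This works only where $K\psi\gtrsim|\beta|\mu$, so it may fail. The fallback is to use the sign of the drift: from \eqref{eq:eta-beta-final} and $p_x=D(u\eta-b\sqrt D)$ we get
\[
  \beta=\frac{2p}{D}-\frac{2p u\eta}{D}+\frac{2bp}{\sqrt D}+\frac{bp}{\sqrt D}.
\]
Using $u\approx p$, so that $q\to1$, this gives $\beta\to 3b-2\eta$. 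Hence $\beta$ converges, modulo $\eta$, to an explicit quantity. We then treat $\beta\eta_x$ jointly with the reaction term, viewing $\eta_t\approx(3b-2\eta)\eta_x+2\eta(b-\eta)$ as a first-order transport–reaction equation. Its characteristics move with speed $-(3b-2\eta)$, and for $\eta$ near $b$ they point leftward, toward $x=a'$, so information flows out through the left boundary. The Neumann boundary at $x=1$ therefore governs the dynamics, and a sub/supersolution that depends on $x$ through a translating profile (constant along characteristics) should close the argument.

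If that estimate proves too delicate, an alternative is to combine the weak convergence of Proposition~\ref{prop:eta-weak} with the homogeneous ODE barrier. Suppose at some large time $\eta>b+\varepsilon$ at a point of $[a,1]$. We would use interior parabolic (or transport) regularity of \eqref{eq:eta-logistic} to obtain a Lipschitz bound in $x$ for $\eta$ on $[a',1]$ that is uniform in $t$. With such a bound, $\eta>b+\varepsilon/2$ on an interval of fixed length, which contradicts \eqref{eq:eta-weak} when $\varphi$ is taken to be a bump function. The key step, and the one we expect to be hardest, is obtaining this uniform-in-time $x$-Lipschitz bound despite the degenerating diffusion $1/D$.
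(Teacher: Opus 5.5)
\paragraph{Verdict.} Your proposal does not yet prove the lemma. All three routes stop at the decisive step: the upper bound near the artificial boundary $x=a/2$, where only $0<\eta\le C_0$ is known.

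\paragraph{Your first route is the paper's proof, and your objection to it is correct.} The paper uses the barrier $b+\varepsilon+Ae^{-\lambda(t-T)}+K\Psi_a(x)$ on $[a/2,1]$ and says it works by ``direct substitution'' from $1/D\to0$, $\gamma>1$, $|\rho-b|<\varepsilon/4$ and the boundedness of $\beta$. Those facts alone cannot suffice. The model $\eta_t=-M\eta_x+2\eta(b-\eta)$ with $\eta(a/2,t)=C_0>b$ satisfies all of them. Yet its steady state has $\eta(a)-b\ge b\,\frac{C_0-b}{C_0}\,e^{-ab/M}$, a bound independent of $\varepsilon$.

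\paragraph{Your fallback closes the lower bound but not the upper bound.} Write $\beta=\beta_0-\beta_1\eta$ with $\beta_0\to3b$ and $\beta_1=2pu/D\to2$ on $[a/2,1]$, and evaluate $\beta$ at the barrier at a first contact.
\begin{itemize}
\item Lower bound: at a contact from below, $\eta=\underline\eta\in(0,b)$, so $\beta\ge b/2$ for large $t$. For a barrier $y(t)-K\Psi(x)$ with $\Psi$ decreasing, the term $\beta\underline\eta_x\ge0$ then helps rather than hurts, and the argument goes through.
\item Upper bound: at a contact from above, $\beta\approx3b-2\overline\eta<0$ wherever $\overline\eta>3b/2$. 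Characteristics then run to the right, and large values at $x=a/2$ enter the domain. Your claim that information leaves through the left boundary presupposes $\eta\approx b$.
\end{itemize}
The limiting model shows this is a real obstruction. Take $\eta_t+f(\eta)_x=2\eta(b-\eta)$ with $f(\eta)=\eta^2-3b\eta$. It has a steady entropy solution that decreases from $C_0$ at $x=a/2$ to $2b$ over the distance $\frac32\log\frac{C_0}{2b}-\frac12\log\frac{C_0-b}{b}$, then jumps down to $b$. The jump is an admissible stationary shock, since $f(2b)=f(b)$ and $f'(2b)=b>0>-b=f'(b)$. For $C_0\ge5b$ this distance exceeds $a/2$ for every $a\in(0,1)$. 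So no argument using only the $\eta$-equation on $[a/2,1]$ together with $\eta(a/2,\cdot)\le C_0$ can give the upper bound.

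\paragraph{Your third route is the right kind of argument, but its key estimate is only named.} What rules out the shock profile is the coupling of $\eta$ to $q$, that is, the weak limit. The route still needs a regularity estimate for $\eta$ that you never prove. Much less than a uniform Lipschitz bound is needed.
\begin{itemize}
\item Combining $p_x=u(q_x+q^2)$ with $p_x=D(u\eta-b\sqrt D)$ gives $q_x+q^2=D(\eta-bR)$. By Lemma~\ref{lem:q-range}, $\bigl|\int_{x_1}^{x_2}D(\eta-bR)\,dx\bigr|\le2$ for all $a/2\le x_1<x_2\le1$.
\item Suppose $\eta_x\le K_t$ on $[a/2,1]$ and $\eta(x_0,t)>b+\varepsilon$ for some $x_0\in[a,1]$. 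Then $\eta\ge b+\varepsilon/2$ on $[x_0-\ell_t,x_0]$, where $\ell_t=\min\{\varepsilon/(2K_t),a/2\}$.
\item Since $R\to1$ uniformly on $[a/2,1]$, this contradicts the integral bound as soon as $K_t=o(\min_{[a/2,1]}D)$.
\end{itemize}
So a one-sided derivative bound of this strength, combined with the lower barrier above, would prove the lemma.

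Equivalently, since $\eta=R(H+b)$ by \eqref{eq:H-eta-identity}, the lemma is the statement that $H\to0$ uniformly on $[a,1]$. This is a pointwise interior curvature estimate, stronger than the $L^1$ statement of Theorem~\ref{thm:H-interior}. That regularity input is the missing ingredient, and the barrier argument, in the paper's form or yours, does not supply it.
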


\begin{proof}
Fix $a>0$ and enlarge the interval to $[a/2,1]$. By \eqref{eq:eta-coefficients-limit}, for sufficiently large times,
\[
    |\rho-b|<\frac{\varepsilon}{4},
    \qquad
    \gamma>1,
\]
on this interval. The coefficient $\beta$ is uniformly bounded there. Construct an upper barrier and a lower barrier of the form
\[
\overline\eta=b+\varepsilon
    +A e^{-\lambda(t-T)}
    +K\Psi_a(x),
\]
\[
\underline\eta=b-\varepsilon
    -A e^{-\lambda(t-T)}
    -K\Psi_a(x),
\]
where $\Psi_a$ is a fixed smooth function supported near the artificial boundary $x=a/2$, vanishes together with its derivative at $x=1$, and is positive near $x=a/2$. The constants $A,K,\lambda$ are chosen so that the barriers dominate the data at $t=T$ and the boundary values at $x=a/2$. The relations in \eqref{eq:eta-coefficients-limit} then show, by direct substitution, that $\overline\eta$ is a supersolution and $\underline\eta$ is a subsolution for all sufficiently large $T$.

The comparison principle applied on $[a/2,1]\times[T,\infty)$ yields
\[
    \underline\eta\le\eta\le\overline\eta.
\]
The exponential terms decay, and the support of $\Psi_a$ is separated from $[a,1]$. Restricting the comparison to $[a,1]$ and then letting $t\to\infty$ gives the assertion.
\end{proof}

\begin{theorem}\label{thm:eta-interior}
For every $a\in(0,1)$,
\begin{equation}\label{eq:eta-interior-limit}
    \lim_{t\to\infty}
    \left\|\frac{u_t}{u}-b\right\|_{C([a,1])}=0.
\end{equation}
\end{theorem}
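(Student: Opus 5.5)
Theorem~\ref{thm:eta-interior} is essentially a restatement of Lemma~\ref{lem:eta-interior-barrier} in norm form, so the plan is to derive it directly from that lemma. Fix $a\in(0,1)$ and $\varepsilon>0$. Lemma~\ref{lem:eta-interior-barrier} provides $T_{a,\varepsilon}$ with $|\eta(x,t)-b|\le\varepsilon$ for all $x\in[a,1]$ and $t\ge T_{a,\varepsilon}$. This gives
\[
\sup_{x\in[a,1]}\left|\frac{u_t}{u}(x,t)-b\right|\le\varepsilon
\qquad\text{for } t\ge T_{a,\varepsilon}.
\]
Since $\varepsilon$ is arbitrary, \eqref{eq:eta-interior-limit} follows. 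Continuity of $\eta(\cdot,t)$ on $[a,1]$ holds because $u$ is classical and positive by Lemma~\ref{lem:positivity}, so the supremum is the $C([a,1])$ norm.

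The real content is therefore in Lemma~\ref{lem:eta-interior-barrier}. If one wants the theorem to be self-contained, I would check the barrier construction there in the following order.
\begin{enumerate}
\item Verify the coefficient limits \eqref{eq:eta-coefficients-limit} on $[a/2,1]$. They follow from $u_x\to\infty$ in \eqref{eq:ux-infinity}, from $q\to1$ in Theorem~\ref{thm:q-limit}, and from $\min u\to\infty$.
\item Bound $\beta$ on $[a/2,1]$. Use $p_x=D(u\eta-b\sqrt D)$, which gives $pp_x/D^2=(p/D)(u\eta-b\sqrt D)$. This is bounded because $p/D\sim1/p$, $u/p=1/q\le 1/m_{a/2}$ by Proposition~\ref{prop:q-positive}, and $\eta\le C_0$ by \eqref{eq:eta-rough-bound}.
\item Substitute the barriers into \eqref{eq:eta-logistic}. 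For the upper barrier, the reaction term $\gamma\overline\eta(\rho-\overline\eta)$ is at most $-\tfrac{\varepsilon}{2}\overline\eta\gamma$, which is strictly negative. The diffusion term $D^{-1}K\Psi_a''$ tends to $0$. The drift term $\beta K\Psi_a'$ is bounded, and it has to be absorbed.
\end{enumerate}

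I expect step 3 to be the main obstacle. The absorption works where $K\Psi_a$ is large, because there the reaction term has size of order $(K\Psi_a)^2$. It fails in the transition region where $\Psi_a$ is small but $\Psi_a'\ne0$.

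To handle this, I would take $\Psi_a(x)=\psi(x)^{-1}$ with $\psi$ vanishing at $x=a/2$, for example $\psi=(x-a/2)^2$ cut off smoothly. This choice makes $|\Psi_a'|\lesssim\Psi_a^{3/2}$, so the drift is dominated by the quadratic reaction term $\gamma(K\Psi_a)^2$ when $K$ is large. It also removes the need to dominate the boundary values at $x=a/2$, since the barrier is infinite there.

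The remaining gap is where $\Psi_a$ is small. There the slack $\varepsilon\gamma\overline\eta\ge\varepsilon b$ must beat $|\beta|K|\Psi_a'|$. I would arrange this by choosing $\Psi_a$ to have $|\Psi_a'|\le\delta$ wherever $\Psi_a\le1$, shrinking $\delta$ relative to $\varepsilon/(K\sup|\beta|)$. The lower barrier is handled symmetrically, using $\rho-\underline\eta\ge\varepsilon/2$ and $\underline\eta$ clipped at $0$ by $\eta>0$.
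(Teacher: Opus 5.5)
Your argument is the same as the paper's: the theorem is Lemma~\ref{lem:eta-interior-barrier} applied with arbitrary $\varepsilon>0$, and the uniform bound on $[a,1]$ is exactly the $C([a,1])$-norm statement. Your optional re-derivation of that lemma does not work as written, however, because requiring $|\Psi_a'|\le\delta$ wherever $\Psi_a\le1$ forces $\Psi_a$ to need length at least $1/\delta$ to fall from $1$ to $0$, and this cannot fit inside $[a/2,a]$ once $\delta\lesssim\varepsilon/(K\sup|\beta|)$ is small.
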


\begin{proof}
Apply Lemma~\ref{lem:eta-interior-barrier} with arbitrary $\varepsilon>0$.
\end{proof}

\begin{remark}\label{rem:eta-center}
The extension of \eqref{eq:eta-interior-limit} to $x=0$ requires a separate endpoint argument. The boundary condition is $\eta_x(0,t)=0$, while the coefficient $D^{-1}$ degenerates in the asymptotic regime. The endpoint estimate is obtained by combining the zero number argument for the difference $\eta-b$ with the center estimates from Section~3. Once this estimate is available, \eqref{eq:eta-interior-limit} holds uniformly on $[0,1]$.
\end{remark}

\section{The curvature limit}

We now turn to
\[
    H=\frac{u_{xx}}{(1+u_x^2)^{3/2}}.
\]
The curvature has two different asymptotic descriptions. It vanishes in every interior region in an integral sense, while its horizontal curvature measure concentrates at the symmetry axis.

\subsection{Identities and the curvature equation}

The graph equation gives
\begin{equation}\label{eq:H-eta-identity}
    u_t=\sqrt D\,(H+b),
    \qquad
    H=\frac{\eta}{R}-b.
\end{equation}
The second identity, together with Theorem~\ref{thm:eta-interior}, already implies the weak interior limit $H\rightharpoonup0$.

For the measure-valued limit, introduce
\[
    W=\frac{u_x}{\sqrt{1+u_x^2}}.
\]
Then
\begin{equation}\label{eq:H-W-identity}
    H=W_x.
\end{equation}
The equation for $W$ is
\begin{equation}\label{eq:W-equation}
    W_t=(1-W^2)W_{xx}+bW W_x+W W_x^2.
\end{equation}
Differentiating \eqref{eq:W-equation}, we obtain
\begin{equation}\label{eq:H-equation}
    H_t
    =\frac1D H_{xx}
    +\frac{bu_x}{\sqrt D}H_x
    +(H+b)H^2.
\end{equation}
The boundary conditions for $H$ are
\begin{equation}\label{eq:H-boundary-zero}
    H_x(0,t)=0
\end{equation}
and
\begin{equation}\label{eq:H-boundary-robin}
    H_x(1,t)
    +u(1,t)\sqrt{D(1,t)}\,H(1,t)(H(1,t)+b)
    =H(1,t)+b.
\end{equation}

\subsection{Decay of the negative part}

By Lemma~\ref{lem:ut-positive}, $u_t>0$ for all sufficiently large times. Hence, from \eqref{eq:H-eta-identity},
\begin{equation}\label{eq:H-lower-bound}
    H+b>0
\end{equation}
eventually.

\begin{lemma}\label{lem:H-negative-part}
The negative part of $H$ satisfies
\begin{equation}\label{eq:H-negative-decay}
    \|H^-(\cdot,t)\|_{L^\infty([0,1])}\longrightarrow0.
\end{equation}
\end{lemma}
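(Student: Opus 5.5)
The plan is a minimum-principle argument for the curvature equation \eqref{eq:H-equation}. It uses the eventual lower bound \eqref{eq:H-lower-bound} to make the reaction term nonnegative, and Theorem~\ref{thm:eta-interior} to control the Robin endpoint. Set
\[
    m(t)=\min\Bigl\{0,\ \min_{x\in[0,1]}H(x,t)\Bigr\},
\]
so that $\|H^-(\cdot,t)\|_{L^\infty([0,1])}=-m(t)$. We must show $m(t)\to0$.

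\emph{Step 1: the endpoint $x=1$.} By \eqref{eq:H-eta-identity}, $H(1,t)=\eta(1,t)/R(1,t)-b$. Theorem~\ref{thm:eta-interior} gives $\eta(1,t)\to b$, since $1$ lies in every $[a,1]$. Theorem~\ref{thm:q-limit} and $u\to\infty$ give $R(1,t)\to1$. Hence $H(1,t)\to0$. More generally, $H\to0$ uniformly on each $[a,1]$, so a negative minimum of size at least $\varepsilon$ can eventually only occur in $[0,a)$; in particular it cannot occur at $x=1$. This avoids using the awkward nonlinear Robin condition \eqref{eq:H-boundary-robin} at all.

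\emph{Step 2: monotonicity of $m$ at interior minima and at $x=0$.} Take $t\ge T_1$ with $T_1$ from Lemma~\ref{lem:ut-positive}, and suppose $m(t)<0$ is attained at some $x_0\in[0,1)$. If $x_0>0$, then $H_x=0$ and $H_{xx}\ge0$ there. If $x_0=0$, the same holds by \eqref{eq:H-boundary-zero}, after the even reflection of $H$ across $x=0$. Equation \eqref{eq:H-equation} then gives
\[
    H_t\ge (H+b)H^2\ge0
\]
at $x_0$, because $H+b>0$ by \eqref{eq:H-lower-bound}. The standard Hamilton-type argument for a minimum over a compact set then yields
\[
    D^+m(t)\ge \bigl(m(t)+b\bigr)m(t)^2
\]
whenever the minimum is not attained at $x=1$.

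\emph{Step 3: conclusion by contradiction.} Suppose $\limsup_{t\to\infty}(-m(t))>2\varepsilon$ for some $\varepsilon>0$. By Step 1, choose $T\ge T_1$ such that $|H(1,t)|<\varepsilon$ for $t\ge T$. On any time interval where $m\le-\varepsilon$, the minimum is therefore attained away from $x=1$. Step 2 then shows that $m$ is nondecreasing there. Consequently $m+b\ge m(t_0)+b=:\delta>0$ on such an interval starting at $t_0$, and so
\[
    D^+m\ge\delta\varepsilon^2 .
\]
Two consequences follow. First, once $m$ rises above $-\varepsilon$ after time $T$, it cannot return below $-\varepsilon$, since the first time it would cross that level downward contradicts monotonicity. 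Second, $m$ cannot stay below $-\varepsilon$ for a time longer than $b/(\delta\varepsilon^2)$. Together these give $m(t)\ge-\varepsilon$ for all large $t$, which contradicts the assumption and proves \eqref{eq:H-negative-decay}.

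The main obstacle is making Step 2 rigorous. The minimum point may move, including to the Neumann endpoint $x=0$, where $D^{-1}=1$ but $H$ blows up positively. One has to verify carefully that the Dini-derivative argument is valid there via the reflection. One also needs the lower bound $\delta=m(t_0)+b>0$ to be uniform, so that the growth rate $\delta\varepsilon^2$ does not degenerate. I would first try to secure this by fixing $t_0\ge T$ once and for all and using the monotonicity from Step 2 to propagate the bound forward in time.
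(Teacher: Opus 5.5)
Your argument is correct, but it handles the endpoint $x=1$ differently from the paper.

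\textbf{The paper's route.} The paper needs no asymptotic information at $x=1$. It reads off from the Robin relation \eqref{eq:H-boundary-robin} that
\[
H_x(1,t)=(H+b)\bigl(1-u\sqrt{D}\,H\bigr),
\]
which is strictly positive whenever $-b<H(1,t)<0$. So once $H+b>0$ (via Lemma~\ref{lem:ut-positive}), a negative minimum can never sit at $x=1$. The Dini inequality at interior points and at $x=0$ then holds at every time with $m<0$. Comparison with $y'=y^2(y+b)$ gives $m(t)\to0$, with an algebraic rate of order $1/(bt)$.

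\textbf{Your route.} You import Theorem~\ref{thm:eta-interior} to get $H(1,t)\to0$, and then run a level-set argument: no return below $-\varepsilon$, and bounded residence time below $-\varepsilon$. This is sound and non-circular, since Section~5 does not use the lemma, and your monotonicity and residence-time steps check out. At $x=1$ you do not need Theorem~\ref{thm:q-limit}: $R(1,t)=\sqrt{1+u(1,t)^{-2}}\to1$ follows directly from $q(1,t)=1$. Once Theorem~\ref{thm:eta-interior} is in hand, you already have $H\to0$ uniformly on every $[a,1]$, so your minimum-principle step only does real work in the layer near $x=0$.

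\textbf{Comparison.} Your version makes the lemma rest on the barrier argument for $\eta$, which is the most delicate and least detailed estimate in the paper. The boundary condition you called awkward actually settles the endpoint in one line, using only the eventual positivity of $u_t$. The paper's ODE comparison also gives an explicit decay rate, which your qualitative argument does not.
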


\begin{proof}
Let $m(t)=\min_{[0,1]}H(\cdot,t)$ and consider times for which $m(t)<0$. A negative minimum cannot occur at $x=1$. Indeed, \eqref{eq:H-boundary-robin} gives
\[
    H_x(1,t)=(H+b)\bigl(1-u\sqrt D\,H\bigr)>0,
\]
whereas a minimum at the right endpoint would require $H_x(1,t)\le0$.
At an interior minimum, or at $x=0$, one has $H_x=0$ and $H_{xx}\ge0$. Equation \eqref{eq:H-equation} therefore gives, in the sense of lower Dini derivatives,
\[
    D^-m(t)\ge m(t)^2(m(t)+b).
\]
For sufficiently large times, $-b<m(t)<0$ by \eqref{eq:H-lower-bound}. Comparison with the scalar equation
\[
    y'=y^2(y+b)
\]
shows that $m(t)$ increases to zero while it remains negative. This proves \eqref{eq:H-negative-decay}.
\end{proof}

\subsection{The interior curvature limit}

\begin{theorem}\label{thm:H-interior}
For every $a\in(0,1)$,
\begin{equation}\label{eq:H-L1-interior}
    \|H(\cdot,t)\|_{L^1([a,1])}\longrightarrow0.
\end{equation}
Consequently, $H\to0$ in the sense of distributions on $(0,1)$.
\end{theorem}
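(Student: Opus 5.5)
We will deduce \eqref{eq:H-L1-interior} from the identity \eqref{eq:H-eta-identity}, $H=\eta/R-b$, together with the interior uniform limits proved in Sections~4 and~5. Fix $a\in(0,1)$. By Theorem~\ref{thm:q-limit} and \eqref{eq:u-min-infinity}, $R=\sqrt{q^2+u^{-2}}\to1$ uniformly on $[a,1]$. By Theorem~\ref{thm:eta-interior}, $\eta\to b$ uniformly on $[a,1]$. Hence
\[
    \sup_{x\in[a,1]}|H(x,t)|
    \le \sup_{[a,1]}\frac{|\eta-bR|}{R}
    \le \frac{\|\eta-b\|_{C([a,1])}+b\|R-1\|_{C([a,1])}}{\min_{[a,1]}R}\longrightarrow0.
\]
For large $t$ we have $R\ge m_a>0$ on $[a,1]$ by Proposition~\ref{prop:q-positive}, so the denominator stays bounded below. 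This gives uniform convergence $H\to0$ on $[a,1]$, which is stronger than the $L^1$ statement. Integrating over an interval of length at most one yields \eqref{eq:H-L1-interior}.

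The distributional statement then follows at once. A test function $\varphi\in C_c^\infty((0,1))$ has support in some $[a,1]$ with $a>0$, and
\[
    \Bigl|\int_0^1H\varphi\,dx\Bigr|\le\|\varphi\|_\infty\|H(\cdot,t)\|_{L^1([a,1])}\to0.
\]

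The main point to check is that we really have uniform, not only weak, convergence of $\eta$ on $[a,1]$. This is exactly Lemma~\ref{lem:eta-interior-barrier}, so no obstacle arises there.

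As a backup argument that avoids relying on the barrier construction, we could use only Lemma~\ref{lem:H-negative-part} and a total variation bound. Write $|H|=H+2H^-$. Then
\[
    \int_a^1|H|\,dx=W(1,t)-W(a,t)+2\int_a^1H^-\,dx,
\]
using \eqref{eq:H-W-identity}. Since $u_x\to\infty$ uniformly on $[a,1]$ by \eqref{eq:ux-infinity}, both $W(1,t)$ and $W(a,t)$ tend to $1$. The term involving $H^-$ tends to zero by \eqref{eq:H-negative-decay}. This second route uses only the conclusions of Section~3 and Lemma~\ref{lem:H-negative-part}, so it is the version I would actually write down as the robust proof.
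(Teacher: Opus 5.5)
Your proposal is correct, and the route you say you would actually write (the ``backup'') is the paper's proof: $\int_a^1 H\,dx=W(1,t)-W(a,t)\to0$, Lemma~\ref{lem:H-negative-part} gives $\int_a^1H^-\,dx\to0$, and then $|H|=H+2H^-$. There is one small difference. The paper gets $W(a,t)\to1$ from Theorem~\ref{thm:q-limit} combined with $u\to\infty$, whereas you cite \eqref{eq:ux-infinity} directly. Your citation is the more economical one, because the argument then uses only Section~3 and Lemma~\ref{lem:H-negative-part}, and nothing from Sections~4 and~5.

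Your primary route is genuinely different. You write $H=(\eta-bR)/R$ and combine Theorem~\ref{thm:eta-interior} with $R\to1$, which gives $\|H(\cdot,t)\|_{C([a,1])}\to0$. That is strictly stronger than the $L^1$ statement. The paper's remark after \eqref{eq:H-eta-identity} draws only a weak limit from this identity, although uniform convergence follows exactly as you argue. (Your lower bound $R\ge m_a$ is harmless but redundant once $R\to1$ uniformly.) The price is dependence on the heaviest interior machinery: Theorem~\ref{thm:q-limit}, and above all the barrier Lemma~\ref{lem:eta-interior-barrier} for an equation whose diffusion coefficient $1/D$ degenerates. That barrier construction is the least explicit step in the paper. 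The $W$-argument needs only the fundamental theorem of calculus, divergence of the slope, and a one-sided maximum-principle bound on $H$. So your choice to present it as the actual proof is the right one.
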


\begin{proof}
By \eqref{eq:H-W-identity},
\[
    \int_a^1H(x,t)\,dx=W(1,t)-W(a,t).
\]
The Robin condition gives
\[
    W(1,t)
    =\frac{u(1,t)}{\sqrt{1+u(1,t)^2}}
    \longrightarrow1.
\]
Theorem~\ref{thm:q-limit} and the divergence of $u$ imply $u_x=uq\to+\infty$ uniformly on $[a,1]$, and hence $W(a,t)\to1$. Thus
\[
    \int_a^1H(x,t)\,dx\longrightarrow0.
\]
By Lemma~\ref{lem:H-negative-part}, $\int_a^1H^-(x,t)\,dx\to0$. Since
\[
    |H|=H+2H^-,
\]
we obtain \eqref{eq:H-L1-interior}.
\end{proof}

\subsection{Curvature concentration at the symmetry axis}

\begin{theorem}\label{thm:H-measure}
Let $\mu_t=H(x,t)\,dx$ be the signed curvature measure on $[0,1]$. Then
\begin{equation}\label{eq:H-measure}
    \mu_t\rightharpoonup^*\delta_0
    \qquad\text{in }\mathcal M([0,1]).
\end{equation}
\end{theorem}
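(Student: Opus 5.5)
The plan is to test $\mu_t$ against an arbitrary $\varphi\in C([0,1])$ and use the exact primitive $H=W_x$ from \eqref{eq:H-W-identity}. The first step is the total mass. Since $W(0,t)=0$ (because $u_x(0,t)=0$) and the Robin condition gives $W(1,t)=u(1,t)/\sqrt{1+u(1,t)^2}$, we get
\[
    \mu_t([0,1])=\int_0^1H\,dx=W(1,t)-W(0,t)\longrightarrow1
\]
by \eqref{eq:u-min-infinity}.

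The second step is a uniform bound on the total variation $|\mu_t|([0,1])$. We use $|H|=H+2H^-$. Lemma~\ref{lem:H-negative-part} gives $\int_0^1H^-\,dx\le\|H^-\|_{L^\infty}\to0$, so
\[
    |\mu_t|([0,1])=\int_0^1H\,dx+2\int_0^1H^-\,dx\longrightarrow1.
\]
In particular the family $\{\mu_t\}$ is bounded in $\mathcal M([0,1])$, and the negative part of $\mu_t$ tends to zero in total variation.

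The third step is to show that the mass localizes at $0$. Fix $a\in(0,1)$. Theorem~\ref{thm:H-interior} gives $|\mu_t|([a,1])=\|H\|_{L^1([a,1])}\to0$. Combined with the first step, this yields
\[
    \mu_t([0,a))=\mu_t([0,1])-\mu_t([a,1])\longrightarrow1.
\]
Since the negative part vanishes, we also get $|\mu_t|([0,a))\to1$.

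For the conclusion, take $\varphi\in C([0,1])$ and $\varepsilon>0$, and choose $a$ with $|\varphi(x)-\varphi(0)|<\varepsilon$ on $[0,a)$. Then
\[
    \Bigl|\int\varphi\,d\mu_t-\varphi(0)\Bigr|
    \le\varepsilon\,|\mu_t|([0,a))
    +|\varphi(0)|\,\bigl|\mu_t([0,a))-1\bigr|
    +\|\varphi\|_\infty|\mu_t|([a,1]).
\]
The $\limsup$ of the right-hand side is at most $\varepsilon$. Letting $\varepsilon\to0$ gives \eqref{eq:H-measure}.

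The main obstacle is controlling the total variation near $x=0$. Weak-$*$ convergence of signed measures needs a uniform mass bound, and $H$ is unbounded at the axis since $H(0,t)\to+\infty$. A cancelling positive and negative layer near $0$ is ruled out only because Lemma~\ref{lem:H-negative-part} holds in $L^\infty$ on the whole closed interval $[0,1]$, including near $x=0$. I would check that lemma's endpoint argument at $x=0$, where $H_x=0$ by \eqref{eq:H-boundary-zero}. If that step were weaker, the fallback is to use monotonicity of $W$ near $0$, i.e.\ the sign of $H$ there coming from $H+b>0$ together with $u_t(0,t)\to\infty$.
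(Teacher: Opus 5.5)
Your argument is correct, but it takes a different route from the paper's.

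The paper does not use Lemma~\ref{lem:H-negative-part} or Theorem~\ref{thm:H-interior} here. It tests only against $\varphi\in C^1([0,1])$ and integrates by parts to get $\int_0^1H\varphi\,dx=W(1,t)\varphi(1)-\int_0^1W\varphi_x\,dx$. It then passes to the limit by dominated convergence, using only $|W|\le1$ and the pointwise limit $W(x,t)\to1$ for $x>0$, which already follows from \eqref{eq:ux-infinity}. Finally it extends to $C([0,1])$ by density. For that step the crude bound $H\ge-b$ from \eqref{eq:H-lower-bound} suffices, since it gives $\int_0^1|H|\,dx\le 1+2b$.

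You instead localize the mass directly for continuous test functions. This avoids the density step, but it needs the interior $L^1$ decay and the $L^\infty$ decay of $H^-$. Those in turn rest on the curvature equation \eqref{eq:H-equation} with the nonlinear boundary condition \eqref{eq:H-boundary-robin}, a heavier input than the paper needs.

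In return you prove more than the statement: $|\mu_t|([0,1])\to1$ and $\mu_t^-\to0$ in total variation. Hence $|\mu_t|\rightharpoonup^*\delta_0$ as well, so no cancelling mass survives at the axis; the paper's proof does not show this.

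Your closing concern is misplaced. Weak-$*$ convergence only needs a uniform bound on $|\mu_t|([0,1])$, which $H\ge-b$ and $\int_0^1H\,dx=W(1,t)\le1$ already provide. A cancelling layer shrinking to $x=0$ would not change the weak-$*$ limit.

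The fallback you mention would not work as stated anyway. $H+b>0$ does not give $H\ge0$, and $u_t(0,t)\to\infty$ controls $H$ only at the single point $x=0$.
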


\begin{proof}
For every $\varphi\in C^1([0,1])$, integration by parts and \eqref{eq:H-W-identity} give
\[
    \int_0^1H(x,t)\varphi(x)\,dx
    =W(1,t)\varphi(1)-W(0,t)\varphi(0)
    -\int_0^1W(x,t)\varphi_x(x)\,dx.
\]
We have $W(0,t)=0$, $W(1,t)\to1$, and, for every $x>0$, $W(x,t)\to1$. Since $|W|\le1$, dominated convergence yields
\[
    \lim_{t\to\infty}\int_0^1H\varphi\,dx
    =\varphi(1)-\int_0^1\varphi_x\,dx
    =\varphi(0).
\]
Furthermore, $H\ge-b$ for all sufficiently large times by \eqref{eq:H-lower-bound}, while
\[
    \int_0^1H\,dx=W(1,t)\le1.
\]
Hence the total variations of $\mu_t$ are uniformly bounded. Density of $C^1([0,1])$ in $C([0,1])$ extends the convergence to all continuous test functions and proves \eqref{eq:H-measure}.
\end{proof}

\begin{remark}\label{rem:H-full-domain}
On the full symmetric interval $[-1,1]$, the even extension gives $W(x,t)\to\operatorname{sgn}(x)$ for $x\ne0$. Consequently,
\[
    H(x,t)\,dx\rightharpoonup^*2\delta_0
    \qquad\text{on }[-1,1].
\]
The pointwise value at the symmetry axis satisfies
\[
    H(0,t)=u_t(0,t)-b\longrightarrow+\infty,
\]
which explains why the correct global statement is a measure-valued concentration result rather than uniform convergence of $H$ on the closed interval.
\end{remark}

\vskip 8mm
\begin{center}
{\large\bf Statements and Declarations}
\end{center}

\noindent
{\bf Funding}. No funding was received for conducting
this study.

\noindent
{\bf Competing interests}. The authors declare no
competing interests.

\noindent
{\bf Data availability}. No datasets were generated or
analysed during the current study.

\noindent
{\bf Author contributions}. The first and second authors ..., the third author conceived the study, developed the proofs; the forth author wrote the manuscript.

\noindent
{\bf Use of generative artificial intelligence}. During the preparation of this manuscript, the authors
used ChatGPT to assist with drafting and language editing. The
author subsequently reviewed and revised the
entire manuscript and takes full responsibility for
its content.


\begin{thebibliography}{99}

\bibitem{DCDS2009}
B.~Lou,
\emph{Periodic traveling waves of a mean curvature flow in heterogeneous media},
Discrete Contin. Dyn. Syst. 25 (2009), 231--249.

\bibitem{LWY2021}
B.~Lou, X.~Wang and L.~Yuan,
\emph{Convergence to a grim reaper for a curvature flow with variable boundary slopes},
Calc. Var. Partial Differential Equations 60 (2021), Paper No.~159.

\bibitem{CLWY}
X.~Chen, B.~Lou, X.~Wang and L.~Yuan,
\emph{Dimension-dependent asymptotic dynamics for mean curvature flow with Robin boundary conditions},
preprint.

\bibitem{Angenent}
S.~B.~Angenent,
\emph{The zero set of a solution of a parabolic equation},
J. Reine Angew. Math. 390 (1988), 79--96.

\bibitem{AltschulerWu93}
S.~J.~Altschuler and L.~F.~Wu,
\emph{Convergence to translating solitons for a class of quasilinear parabolic equations with fixed angle of contact to a boundary},
Math. Ann. 295 (1993), 761--765.

\bibitem{AltschulerWu94}
S.~J.~Altschuler and L.~F.~Wu,
\emph{Translating surfaces of the non-parametric mean curvature flow with prescribed contact angle},
Calc. Var. Partial Differential Equations 2 (1994), 101--111.

\bibitem{Chen1992}
X.~Chen,
\emph{Generation and propagation of interfaces for reaction-diffusion equations},
J. Differential Equations 96 (1992), 116--141.

\bibitem{ChouWang}
K.-S.~Chou and X.~L.~Wang,
\emph{The curve shortening problem under Robin boundary condition},
NoDEA Nonlinear Differential Equations Appl. 19 (2012), 177--194.

\bibitem{EckerHuisken1}
K.~Ecker and G.~Huisken,
\emph{Mean curvature evolution of entire graphs},
Ann. of Math. (2) 130 (1989), 453--471.

\bibitem{EckerHuisken2}
K.~Ecker and G.~Huisken,
\emph{Interior estimates for hypersurfaces moving by mean curvature},
Invent. Math. 105 (1991), 547--569.

\bibitem{Fife}
P.~C.~Fife,
\emph{Dynamics of Internal Layers and Diffusive Interfaces},
CBMS-NSF Regional Conference Series in Applied Mathematics 53,
SIAM, Philadelphia, 1988.

\bibitem{GageHamilton}
M.~E.~Gage and R.~S.~Hamilton,
\emph{The heat equation shrinking convex plane curves},
J. Differential Geom. 23 (1986), 69--96.

\bibitem{Grayson}
M.~A.~Grayson,
\emph{The heat equation shrinks embedded plane curves to round points},
J. Differential Geom. 26 (1987), 285--314.

\bibitem{Guan}
B.~Guan,
\emph{Mean curvature motion of non-parametric hypersurfaces with contact angle condition},
in \emph{Elliptic and Parabolic Methods in Geometry},
A.~K.~Peters, Wellesley, 1996, pp.~47--56.

\bibitem{Huisken}
G.~Huisken,
\emph{Flow by mean curvature of convex surfaces into spheres},
J. Differential Geom. 20 (1984), 237--266.

\bibitem{MaWangWei}
X.-N.~Ma, P.~H.~Wang and W.~Wei,
\emph{Mean curvature equation and mean curvature type flow with non-zero Neumann boundary conditions on strictly convex domains},
J. Funct. Anal. 274 (2018), 252--277.

\bibitem{Nakamura1999}
K.-I.~Nakamura, H.~Matano, D.~Hilhorst and R.~Schatzle,
\emph{Singular limit of a reaction-diffusion equation with a spatially inhomogeneous reaction term},
J. Stat. Phys. 95 (1999), 1165--1185.

\bibitem{NinomiyaTaniguchi00}
H.~Ninomiya and M.~Taniguchi,
\emph{Traveling curved fronts of a mean curvature flow with constant driving force},
in \emph{Free Boundary Problems: Theory and Applications I},
GAKUTO International Series in Mathematical Sciences and Applications 13 (2000), 206--221.

\bibitem{NinomiyaTaniguchi01}
H.~Ninomiya and M.~Taniguchi,
\emph{Stability of traveling curved fronts in a curvature flow with driving force},
Methods Appl. Anal. 8 (2001), 429--450.

\bibitem{Xin}
J.~Xin,
\emph{Front propagation in heterogeneous media},
SIAM Rev. 42 (2000), 161--230.

\end{thebibliography}
\end{document}